\DeclareMathAlphabet{\mathpzc}{OT1}{pzc}{m}{it}
\newtheorem{propo}{Proposition}[section]
\newtheorem{lemma}[propo]{Lemma}
\newtheorem{definition}[propo]{Definition}
\newtheorem{coro}[propo]{Corollary}
\newtheorem{theo}[propo]{Theorem}
\newtheorem{rmk}[propo]{Remark}
\def\de{{\rm d}}
\newcommand{\sign}{\text{sign}}
\newcommand{\reals}{{\mathds R}}
\newcommand{\eqnsection}{\renewcommand{\theequation}{\thesection.\arabic{equation}}
      \makeatletter \csname @addtoreset\endcsname{equation}{section}\makeatother}
\def\eps{\epsilon}
\def\l|{\left|\left|}
\def\r|{\right|\right|}
\def\P{\mathbb P}
\def\N{\mathbb N}
\def\E{\mathbb E}
\def\ve{\varepsilon}
\def\de{{\rm d}}
\def\reals{{\mathds R}}
\def\us{\underline{\sigma}}
\def\ush{\underline{\widetilde{\sigma}}}
\def\sh{\widetilde{\sigma}}
\def\SK{\mbox{\tiny\rm SK}}
\def\D{\mbox{\tiny\rm D}}
\def\cut{{\rm cut}}
\def\Par{{\sf P}}
\def\Pg #1{G_{n, #1}^{\mbox{\tiny\rm Poiss}}}
\def\Rg #1{G^{\mbox{\tiny\rm Reg}}(n, #1)}
\def\Pc #1{G ^{\tiny\sf{clon}}(n,#1)} 
\def\Erb{E_{\tiny{\sf RB}}}
\def\Gbb{G_{\tiny{\sf BB}}}
\def\Grr{G_{\tiny{\sf RR}}}
\def\Grb{G_{\tiny{\sf RB}}}
\def\tGrr{\widetilde{G}_{\tiny {\sf RR}}} 
\def\hU{\overline{U}}
\def\mcut{{\sf mcut}}
\def\Mcut{{\sf MCUT}}
\def\J{\mathbf{J}} 
\def\z{\mathbf{z}} 
\def\tJ{\widetilde{J}}
\def\tbJ{\widetilde{\J}}
\def\ER{Erd\H{o}s-R\'enyi } 
\def\var{ \rm Var} 
\def\MaxCut{{\sf MaxCut}} 
\def\Deg{{\mathbf Z}}
\def\Degsec{{\mathbf Y}} 
\def\cL{{\mathcal L}}
\newcommand{\abbr}[1]{{\tt{\uppercase{\scalebox{0.85}{#1}}}}}
\title{Extremal Cuts of Sparse Random Graphs}
\author{Amir Dembo$^*$}
\address{Department of Statistics and Mathematics, Stanford University, California, USA, \tt{amir@math.stanford.edu}}
\thanks{${}^*$Research partially supported by NSF grant DMS-1106627.}
\author{Andrea Montanari$^\dagger$}
\address{Department of Electrical Engineering and Statistics, Stanford University, California, USA,  \tt{montanari@stanford.edu}}
\thanks{${}^\dagger$Research partially supported by NSF grants 
DMS-1106627 and CCF-1319979.}
\author{Subhabrata Sen} 
\address{Department of Statistics, Stanford University, California, \tt{ssen90@stanford.edu}} 
\date{\today}
\subjclass[2010]{05C80, 68R10, 82B44.}
\keywords{Max-cut, bisection, \ER graph, regular graph, 
Ising model, spin glass, Parisi formula.}
\begin{document}

\begin{abstract}
For \ER  random graphs with average degree $\gamma$, and uniformly random 
$\gamma$-regular graph on $n$ vertices, we prove that 
with high probability the size of both the Max-Cut and   
maximum bisection are
$n(\frac{\gamma}{4} + \Par_* \sqrt{\frac{\gamma}{4}} + o(\sqrt{\gamma})) + o(n)$
while the size of the 
minimum bisection is 
$n(\frac{\gamma}{4}-\Par_*\sqrt{\frac{\gamma}{4}} + o(\sqrt{\gamma})) + o(n)$.
Our derivation relates the free energy of the 
anti-ferromagnetic Ising model on such graphs to that of the 
Sherrington-Kirkpatrick model, with 
$\Par_* \approx 0.7632$ standing 
for the ground state energy of the latter,
expressed analytically via Parisi's formula.
\end{abstract}

\maketitle

\section{Introduction}
\label{introduction}
Given a graph $G=(V,E)$, a bisection of $G$ is a partition of its vertex set
$V=V_1\cup V_2$ such that the two parts have the same cardinality
(if $|V|$ is even) or differ by one vertex (if $|V|$ is odd). The cut 
size of any partition is defined as the number 
of edges $(i,j)\in E$ such that $i\in V_1$, and $j\in V_2$. 
The minimum (maximum) bisection of $G$ is defined as the 
bisection with the smallest (largest)
size and we will denote this size by $\mcut(G)$ (respectively $\Mcut(G)$). The related 
Max-Cut problem seeks to partition the vertices into two parts such that the 
cut size is maximized. We will denote the size of the Max-Cut by
$\MaxCut(G)$. 
The study of these features is fundamental in combinatorics and theoretical computer science. 
These properties are also critical for a number of practical applications. For example,
minimum bisection is relevant for a number of graph layout and embedding problems
\cite{DiazReview}. For practical applications of Max-Cut, see \cite{PT}.  On the other hand,
it is hard to even approximate these quantities in polynomial time (see, for instance \cite{Hastad,Halperin,Feige,Khot}). 

The average case analysis of these features is also of considerable
interest.  
For example, the study of random graph bisections is motivated by the desire
to justify and understand various graph partitioning
heuristics. Problem instances are usually chosen from the \ER and
uniformly random regular graph ensembles. We recall that an \ER random
graph $G(n,m)$ on $n$ vertices with $m$ edges is a
graph formed by choosing $m$ edges uniformly at random
among all the possible edges. A $\gamma$-regular random graph on
$n$-vertices ${\Rg \gamma}$ 
is a graph drawn uniformly from the set of all graphs on $n$-vertices
where every vertex has degree 
$\gamma$ (provided $\gamma n$ is even). 
See \cite{Bollobas,Janson,Hofstad} for detailed analyses of these graph ensembles.

Both min bisection and Max-Cut undergo phase transitions on the \ER graph 
$G(n, [\gamma n])$. For $\gamma <\log 2$, the largest component has less than $n/2$ vertices and minimum bisection is $O(1)$ asymptotically as $n\to \infty$ while above this threshold, the largest component has size greater than $n/2$ and min bisection is $\Omega(n)$ \cite{Luczak}. Similarly, Max-Cut exhibits a phase transition at $\gamma = 1/2$. The difference between the number of edges and Max-Cut size is $\Omega(1)$ for $\gamma <1/2$, while it is $\Omega(n)$ when $\gamma>1/2$ \cite{Coppersmith}. The distribution of the Max-Cut size in the critical scaling window was determined in \cite{daude}. In this paper, we work in the $\gamma \to \infty$ regime, so that both min-bisection and Max-Cut are $\Omega(n)$ asymptotically. 

Diverse techniques have been employed in the analysis of  minimum and maximum bisection for random graph ensembles. For example, 
\cite{Bollobas84} used the Azuma-Hoeffding inequality to establish that 
$\frac{\gamma}{4}-\sqrt{\frac{\gamma\log 2}{4}}\leq\mcut({\Rg
  \gamma})/n\le \frac{\gamma}{4}+\sqrt{\frac{\gamma\log
    2}{4}}$. 

Spectral relaxation based approaches can also be used to bound these
quantities.  
These approaches observe that the  minimum and maximum bisection problem can be written as optimization 
problems over variables $\sigma_i\in\{-1,+1\}$ associated to the
vertices of the graph. By relaxing the integrality
constraint to an $L_2$ constraint the resulting problem can be solved 
through spectral methods.
For instance, the minimum bisection is bounded as follows
(here $\Omega_n\subseteq \{-1,+1\}^n$ is the set of $(\pm 1)$-vectors with
$\sum_{i =1}^n\sigma_i=0$, assuming for simplicity $n$ even)
\begin{equation}\label{eq:rep-mMcut}
\mcut(G) = 
\min_{\us\in \Omega_n} \Big\{ 
\frac{1}{4}\sum_{(i,j)\in E} (\sigma_i-\sigma_j)^2\Big\}
= \frac{1}{2} \min_{\us\in \Omega_n}
            \{ \us \cdot (\cL_G \us) \} \ge \frac{1}{2}\,
                 \lambda_2(\cL_G)\, .
\end{equation}
Here $\cL_G$ is the Laplacian of $G$, with eigenvalues
$0=\lambda_1(\cL_G)\le \lambda_2(\cL_G)\le \cdots \lambda_n(\cL_G)$. 
For  regular graphs, using the result of \cite{Friedman}, this implies
$\mcut({\Rg \gamma})/n\ge \frac{\gamma}{4}-\sqrt{\gamma-1}$. 
However for \ER graphs $\lambda_2(\cL_G)=o(1)$ vanishes with $n$
\cite{khorunzhiy2006lifshitz} and hence this approach fails. 
A similar spectral relaxation yields, for regular graphs,
, $\Mcut({\Rg \gamma})/n\le \frac{\gamma}{4}+
\sqrt{\gamma-1}$, but fails for \ER graphs. Non-trivial spectral
bounds on \ER
graphs can  be derived, for instance, from \cite{feige2005spectral,coja2007laplacian}.

An  alternative approach consists in analyzing algorithms to minimize
(maximize) the cut size. This provides upper bounds on 
$\mcut(G)$ (respectively, lower bounds on $\Mcut(G)$). For 
instance, 
\cite{Alon97} proved that all regular graphs 
have $\mcut(G)/n\le \frac{\gamma}{4}-\sqrt{\frac{9\gamma}{2048}}$ for all $n$ large enough (this method was further developed in \cite{Diaz07}).

Similar results have been established for the max-cut problem on \ER
random graphs. In a recent breakthrough paper,
\cite{BGT} establish that there exists $\mathcal{M}(\gamma)$ such that $\MaxCut(G(n, [\gamma n]))/n \pto \mathcal{M}(\gamma)$
and following upon it
\cite{GL} prove that $\mathcal{M}(\gamma)\in [\gamma/2+0.47523 \sqrt{\gamma}, \gamma/2 + 0.55909\sqrt{\gamma}]$.

To summarize, the general flavor of these results is that if $G$ is an \ER or a random regular graph on $n$ vertices with $[\gamma n/2]$ edges, then 
$\mcut(G)/n = \gamma/4 - \Theta(\sqrt{\gamma})$ 
while $\Mcut(G)/n$ and $\MaxCut(G)/n$ behave asymptotically like
$\gamma/4 + \Theta(\sqrt{\gamma})$.  In other words, the relative spread of cut widths around its average is of order $1/\sqrt{\gamma}$. Despite 30 years of research in combinatorics and random graph theory, even the leading behavior of such a spread is undetermined.

On the other hand, there are detailed and intriguing predictions in
statistical physics--- based mainly on the 
non-rigorous cavity method \cite{mezard2009information}, which relate the behavior of these
features to that of mean field spin glasses.
 From a statistical physics perspective, determining the minimum
 (maximum)  bisection is equivalent to finding the ground state energy
 of the  ferromagnetic (anti-ferromagnetic) Ising model constrained to
 have zero magnetization (see \cite{percus2008peculiar} and the
 references therein). Similarly, the Max-Cut is naturally associated
 with the ground state energy of an anti-ferromagnetic Ising model on
 the graph. 
The cavity method then suggests a surprising conjecture \cite{zdeborova}
that, with high probability,
$\Mcut({\Rg \gamma}) = \MaxCut({\Rg \gamma})+o(n) =n\gamma/2-\mcut({\Rg \gamma}) +o(n)$. 

The present paper  bridges this gap, by partially confirming some of
the  physics predictions and provides estimates of these features which are sharp up to corrections of 
order $n o(\sqrt{\gamma})$. Our estimates are expressed in terms of the 
celebrated Parisi formula for the free-energy of the Sherrington 
Kirkpatrick spin glass, and build on its recent proof by Talagrand.
 In a sense, these results explain the difficulty
encountered by classical combinatorics techniques in attacking 
this problem. In doing so, we develop a new approach based on an interpolation 
technique from the theory of mean field spin glasses 
\cite{GuerraToninelliSK,GuerraToninelliDiluted,TalagrandBook}.
So far this technique has been used in combinatorics
only to prove bounds \cite{FranzLeone}. We combine and extend these ideas, crucially utilizing properties of both the Poisson and Gaussian distributions to derive an 
asymptotically sharp estimate.

\subsection{Our Contribution} 
To state our results precisely, we proceed with a short review 
of the Sherrington-Kirkpatrick (SK) model of spin glasses. 
This canonical example of a mean field spin glass has been studied
extensively by physicists \cite{MPV}, and seen an explosion of 
activity in mathematics
following Talagrand's proof of the Parisi formula, leading to 
better understanding of the SK model and its generalizations (c.f. 
the text \cite{Pan} for an introduction to the subject).

The SK model is a (random) probability distribution on the hyper-cube
$\{-1,+1\}^n$ which assigns mass proportional to $\exp(\beta
H^{\SK}(\us))$ to each `spin configuration' 
$\us \in \{-1,+1\}^n$. The parameter $\beta>0$ is
interpreted as the inverse temperature, with $H^{\SK}(\cdot)$ 
called the Hamiltonian of the model. The collection 
$\{H^{\SK}(\us): \us \in \{-1,+1\}^n\}$ is a 
Gaussian process on 
$\{-1,+1\}^n$ with mean 
$\E[H^{\SK}(\us)] = 0$ and covariance $\E\{H^{\SK}(\us)H^{\SK}(\us')\}=
\frac{1}{2n}\, (\us\cdot\us')^2$. This
process is usually constructed by
\begin{eqnarray}\label{eq:def-SK-hamilton}
H^{\SK}(\us) = -\frac{1}{\sqrt{2n}} \sum_{i,j=1}^n J_{ij}\sigma_i\sigma_j\, ,
\end{eqnarray}
with $\{J_{ij}\}$ being $n^2$ independent standard Gaussian variables,
and we are mostly interested in the ground state energy of the SK model. That is,
the expected (over $\{J_{ij}\}$) minimum (over $\us$), of the Gaussian process 
$H^{\SK}(\us)$ introduced above.  
\begin{definition}
Let $\mathcal{D}_{\beta}$ be the space of non-decreasing, right-continuous non-negative functions 
$x:[0,1]\to [0,\beta]$.
The \emph{Parisi functional} at inverse temperature $\beta$ is the 
function $\Par_{\beta}:\mathcal{D}_{\beta}\to \reals$ defined by
\begin{eqnarray}
\Par_{\beta}[x] = f(0,0;x)-\frac{1}{2}\int_{0}^1 q\, x(q)\, \de q\, ,
\end{eqnarray}
where $f:[0,1]\times \reals\times \mathcal{D}_{\beta}\to\reals$, 
$(q,y,x)\mapsto f(q,y;x)$ is 
the unique weak solution of the 
\abbr{PDE} with boundary condition
\begin{eqnarray}
\frac{\partial f}{\partial q}+\frac{1}{2}\, \frac{\partial^2 f}{\partial y^2}
+\frac{1}{2}\, x(q)\, \left(\frac{\partial f}{\partial y}\right)^2=0\, ,
\qquad 
f(1,y;x)=(1/\beta)\log (2\cosh (\beta y)) \,
\label{eq:ParisiEq}
\end{eqnarray}
among all continuous functions $f(q,y)$ such that  
$\frac{\partial f}{\partial y} \in L^2([0,1] \times \reals)$.

The Parisi replica-symmetry-breaking prediction for the SK model is
\begin{align}
\Par_{*,\beta} \equiv \inf\{\Par_{\beta}[x]:\, x\in
  \mathcal{D}_{\beta}\}\, .
\end{align}
\end{definition}
We refer to \cite[Proposition 7]{jagannath_tobasco15} for the 
uniqueness of such a solution of \eqref{eq:ParisiEq}, and to 
\cite{auffinger2014parisi} for the strict convexity of  
$x \mapsto \Par_{\beta}[x]$, which implies the existence 
of a unique global minimizer of $\Par_{*,\beta}$.
We are interested here in the zero-temperature limit
\begin{align}\label{eq:SK-zero-temp-lim}
\Par_*\equiv \lim_{\beta \to \infty}\Par_{*,\beta}\, ,
\end{align}
which exists because the free energy density (and hence
$\Par_{*,\beta}$, by \cite{TalagrandFormula}), is uniformly 
continuous in $1/\beta$.
It follows from the Parisi Formula \cite{TalagrandFormula}, that 
\begin{align}
\label{SK_groundstate}
\lim_{n \to \infty} n^{-1} \E[\max_{\us} \{ H^{\SK}(\us) \}] = \Par_* \,.
\end{align}
The partial differential equation
(\ref{eq:ParisiEq}) can be solved numerically 
to high precision, resulting with the numerical evaluation of
$\Par_* =
0.76321\pm 0.00003$ \cite{StatMech}, whereas using the replica symmetric bound
of \cite{guerra2003broken}, it is possible to prove that $\Par_*\le
\sqrt{2/\pi}\approx 0.797885$.

We next introduce some additional notation necessary for stating our results. Throughout the paper, $O(\cdot)$, $o(\cdot)$, and $\Theta(\cdot)$ stands for the usual $n \to \infty$ asymptotic, while $O_{\gamma}(\cdot)$, $o_{\gamma}(\cdot)$ 
and $\Theta_{\gamma}(\cdot)$ are used
to describe the $\gamma \to \infty$ asymptotic regime. We say that a sequence of events $A_n$ occurs with high probability (w.h.p.) if $\P(A_n) \to 1$ 
as $n\to \infty$. Finally, for random $\{X_n\}$ and 
non-random $f: \mathbb{R}^+ \to \mathbb{R}^+$, we say 
that $X_n = o_{\gamma}(f(\gamma))$ w.h.p. as $n\to \infty$ if 
there exists non-random $g(\gamma) = o_{\gamma}(f(\gamma))$ such 
that the sequence $A_n = \{ |X_n| \leq g(\gamma)\}$ occurs w.h.p. 
(as $n\to \infty$).  

Our first result provides estimates of the minimum and maximum bisection of \ER random graphs in terms of the SK quantity $\Par_*$ of \eqref{eq:SK-zero-temp-lim}. 
\begin{theo}
\label{ER}
We have, w.h.p. as $n\to \infty$, that 
\begin{align}
\frac{\mcut(G(n, [\gamma n]))}{n} &= \frac{\gamma}{2}- \Par_* \sqrt{\frac{\gamma}{2}} + o_{\gamma}(\sqrt{\gamma})\, ,\label{mcut_er} \\
 \frac{\Mcut(G(n, [\gamma n] ))}{n} &= \frac{\gamma}{2}+ \Par_* \sqrt{\frac{\gamma}{2}} + o_{\gamma}(\sqrt{\gamma}).
\end{align}
\end{theo}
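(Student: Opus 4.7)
Using $\sigma_i\sigma_j = 1-2\ind\{\sigma_i\neq \sigma_j\}$ and setting $H_G(\us):=\sum_{(i,j)\in E}\sigma_i\sigma_j$, we have
\begin{align*}
\mcut(G) = \frac{|E|}{2} - \frac12\max_{\us\in\Omega_n}H_G(\us),\qquad \Mcut(G) = \frac{|E|}{2} - \frac12\min_{\us\in\Omega_n}H_G(\us).
\end{align*}
Since $|E|=\lfloor\gamma n\rfloor$, the theorem reduces (w.h.p.) to
\begin{align*}
n^{-1}\max_{\us\in\Omega_n}H_G(\us) = \sqrt{2\gamma}\,\Par_* + o_\gamma(\sqrt{\gamma}),\qquad n^{-1}\min_{\us\in\Omega_n}H_G(\us) = -\sqrt{2\gamma}\,\Par_* + o_\gamma(\sqrt{\gamma}).
\end{align*}
Concentration of these extremal values around their expectations is standard: a single edge swap perturbs every cut size by at most one, so Azuma--Hoeffding yields Gaussian deviations of order $\sqrt{n}$. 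It thus suffices to control expectations.

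\textbf{Positive temperature and Guerra--Toninelli interpolation.} Pass to $\beta>0$ via the free energy $F_n(\beta)=n^{-1}\E\log\sum_{\us\in\Omega_n}e^{\beta H_G(\us)}$, so that $\lim_{\beta\to\infty}\beta^{-1}F_n(\beta)=n^{-1}\E\max_{\us\in\Omega_n}H_G(\us)$ up to an $O((\log n)/\beta)$ entropy correction. For fixed $\us\in\Omega_n$ one checks $\E[H_G(\us)]=O(1)$ and $\var[H_G(\us)]=\gamma n(1+o_n(1))$, matching the variance of the rescaled SK Hamiltonian $\sqrt{2\gamma}\,H^{\SK}(\us)$. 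To exploit this, for $t\in[0,1]$ introduce the interpolation
\begin{align*}
\phi_n(t,\beta) := \frac{1}{n}\E\log\sum_{\us\in\Omega_n}\exp\!\Big\{\beta\!\!\sum_{(i,j)\in E_t}\!\!\sigma_i\sigma_j + \beta\sqrt{\tfrac{2\gamma(1-t)}{n}}\sum_{i<j}g_{ij}\sigma_i\sigma_j\Big\},
\end{align*}
with $E_t$ a Poissonized ER edge set of mean cardinality $t\gamma n$ and $\{g_{ij}\}$ i.i.d.\ standard Gaussians independent of $E_t$. Then $\phi_n(1,\beta)=F_n(\beta)+o_n(1)$ (de-Poissonization contributes $O(\sqrt n)$) and $\phi_n(0,\beta)$ is the SK free energy on $\Omega_n$ at inverse temperature $\beta\sqrt{2\gamma}$. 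Differentiating in $t$ via Gaussian integration by parts and the Poisson cumulant formula, $\partial_t\phi_n(t,\beta)$ takes the form of a replica expectation of a function $\Delta(R_{12})$ of the overlap $R_{12}=n^{-1}\us^{(1)}\!\cdot\!\us^{(2)}$: the leading $R_{12}^2$ piece cancels by the variance match, and the remaining $R_{12}^{2k}$ ($k\ge 2$) contributions are of size $o_\gamma(\sqrt{\gamma})$ after integration in $t$.

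\textbf{Completion and main obstacle.} Applying Talagrand's Parisi formula \eqref{SK_groundstate} to the SK side and then sending $\beta\to\infty$ using \eqref{eq:SK-zero-temp-lim} yields $n^{-1}\E\max_{\us\in\Omega_n}H_G(\us)=\sqrt{2\gamma}\,\Par_* + o_\gamma(\sqrt{\gamma})$. The bisection constraint on the SK side is essentially free: by the $\us\mapsto -\us$ symmetry of $H^{\SK}$, the unconstrained SK ground state has magnetization $o(n)$ w.h.p., and a Lagrange-multiplier argument shows that projecting onto $\Omega_n$ perturbs the energy by at most $o(n)$ (at this scale, negligible compared to $n\sqrt{\gamma}$). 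The minimum-bisection estimate follows from the sign-symmetry $g_{ij}\mapsto -g_{ij}$ of the Gaussian surrogate, which the ER Hamiltonian inherits to within $o_\gamma(\sqrt{\gamma})$ as $\gamma\to\infty$. The main technical obstacle is obtaining a \emph{two-sided} interpolation bound: the classical \cite{FranzLeone} argument uses convexity of $x\mapsto e^{\beta x}$ to give only one-sided comparisons between diluted and SK free energies, whereas the sharp $o_\gamma(\sqrt{\gamma})$ precision required here demands matching upper and lower bounds. This in turn requires carefully tracking how the cumulants of a Poisson edge insertion and a Gaussian coupling insertion asymptotically coincide in the large-$\gamma$ regime, all while controlling the zero-temperature limit $\beta\to\infty$.
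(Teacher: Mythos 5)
Your proposal is essentially the same approach the paper takes: reduce $\mcut$ and $\Mcut$ to constrained Ising ground-state energies, concentrate around expectations, pass to positive temperature, run a Guerra--Toninelli interpolation between the (Poissonized) diluted model and a Gaussian SK surrogate at inverse temperature $\beta\sqrt{2\gamma}$, cancel the leading $\ell=2$ overlap term by variance matching (with $\ell=1$ killed by the zero-magnetization constraint), control the remaining terms as $o_\gamma(\sqrt\gamma)$, then apply the Parisi formula and show the bisection constraint on the SK side costs only $o(n)$. The one place where your sketch is noticeably thinner than the paper is the removal of the constraint on the SK side: ``a Lagrange-multiplier argument'' is not a proof, and the paper's Lemma 2.4 in fact requires a careful construction (flip a set of $O(\sqrt{n\log n})$ spins chosen among coordinates with small local field $|f_i^\star|\le 6$, then control the cross terms via a union bound over all such sets), yielding an $O(n^{1/2+\delta})$ perturbation rather than merely $o(n)$.
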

 \begin{rmk}
\label{er_equivalence} Recall the \ER random graph 
$G_I(n ,p_n)$, where each edge is independently
included with probability $p_n$. Since the number of edges in 
$G_I(n,\frac{2\gamma}{n})$ is concentrated around $\gamma n$, 
with fluctuations of $O(n^{1/2+\ve})$ w.h.p. for any $\epsilon >0$, 
for the purpose of Theorem \ref{ER} the random graph $G_I(n,\frac{2\gamma}{n})$
has the same asymptotic behavior as $G(n,[\gamma n])$.
\end{rmk}

\begin{rmk}
The physics interpretation of Theorem \ref{ER} is that a zero-magnetization constraint 
forces a ferromagnet on a random graph to be in a spin glass phase.
This phenomenon is expected to be generic for models on non-amenable graphs 
(whose surface-to-volume ratio is bounded away from zero), 
in staggering contrast with what happens on amenable graphs (e.g.
regular lattices), where such zero magnetization constraint 
leads to a phase separation. 
\end{rmk} 

We next outline the strategy for proving Theorem \ref{ER} 
(with the detailed proof provided in Section \ref{interpolation}).
For graphs $G= (V,E)$, with vertex set $V=[n]$ and $n$ even, 
we write $\us\in\Omega_n$ if the assignment of binary variables
$\us = (\sigma_1,\dots,\sigma_n)$, $\sigma_i\in \{-1,+1\}$ 
to $V$ is such that $\sum_{i\in V}\sigma_i = 0$.
We further define the Ising energy function
$H_G(\us) = -\sum_{(i,j)\in E}\sigma_i
\sigma_j$, and let $U_-(G)\equiv \min\{H_G(\us):\, \us\in\Omega_n\}$,
$U_+(G)\equiv \max\{H_G(\us):\, \us\in\Omega_n\}$. It is then 
clear that
\begin{eqnarray}
\mcut(G) = \frac{1}{2}\, |E| + \frac{1}{2}\,U_-(G)\,,
\;\;\;\;\;\;
\Mcut(G) = \frac{1}{2}\, |E| + \frac{1}{2}\,U_+(G)\,.
\label{eq:BasicIdentity1}
\end{eqnarray}
In statistical mechanics $\us$ is referred to as a `spin configuration'
and $U_{-}(G)$ (respectively $U_+(G)$), its `ferromagnetic
(anti-ferromagnetic) ground state energy.'

The expected cut size of a random partition is taken care of by the term
$\frac{1}{2}|E|$, whereas standard concentration inequalities imply 
that $U_+(G)$ and $U_-(G)$ are tightly concentrated around their expectation 
when $G$ is a sparse \ER random graph. Therefore, it suffices to prove that 
as $n \to \infty$ all limit points of $n^{-1} \E [ U_{\pm}(G)]$ 
are within $o_{\gamma}(\sqrt{\gamma})$ of $\pm \Par_*\sqrt{2\gamma}$.
Doing so is the heart of the whole argument, and it is achieved 
through the interpolation technique of 
\cite{GuerraToninelliSK,GuerraToninelliDiluted}. 
Intuitively, we replace the graph $G$ 
by a complete graph with random edge weights $J_{ij}/\sqrt{n}$
for $J_{ij}$ independent standard normal random variables, and 
prove that the error induced on $U_{\pm}(G)$ by this replacement is 
bounded (in expectation) by $n\,o_{\gamma}(\sqrt{\gamma})$. Finally, we 
show that the maximum and minimum cut-width of such weighted complete graph,
do not change much when optimizing over all partitions $\us \in \{-1,+1\}^n$
instead of only over the balanced partitions $\us\in\Omega_n$.
Now that the equi-partition constraint has been relaxed, the problem 
has become equivalent to determining the ground state energy of the 
SK spin glass model, which is solved by taking the `zero temperature'
limit of the Parisi formula (from \cite{TalagrandFormula}).

The next result extends Theorem \ref{ER} to $\gamma$-regular random graphs. 
\begin{theo}
\label{d-reg}
We have, w.h.p. as $n\to \infty$, that  
\begin{align}
\frac{\mcut({\Rg \gamma})}{n} &= \frac{\gamma}{4}- \Par_* \sqrt{\frac{\gamma}{4}} + o_{\gamma}(\sqrt{\gamma})\, , \\
\frac{\Mcut({\Rg \gamma})}{n} &= \frac{\gamma}{4}+ \Par_* \sqrt{\frac{\gamma}{4}} + o_{\gamma}(\sqrt{\gamma}).
\end{align}
\end{theo}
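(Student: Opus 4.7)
I would follow the blueprint of the proof of Theorem \ref{ER}: reduce everything to the ferromagnetic/anti-ferromagnetic ground state energies $U_\pm(\Rg{\gamma})$ via the identities $\mcut(G)=|E|/2 + U_-(G)/2$ and $\Mcut(G)=|E|/2+U_+(G)/2$, then prove that $n^{-1}\E[U_\pm(\Rg{\gamma})]\to \pm\Par_*\sqrt{\gamma}+o_\gamma(\sqrt{\gamma})$ by a Guerra--Toninelli-type interpolation to the SK model, and finally pair this with a concentration bound. Since $|E|=\gamma n/2$ in the $\gamma$-regular ensemble, combining these ingredients gives the target expressions $\gamma/4\pm\Par_*\sqrt{\gamma/4}+o_\gamma(\sqrt{\gamma})$. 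It is reassuring that substituting $\gamma'=\gamma/2$ into the formula of Theorem \ref{ER} recovers exactly the desired expressions, so the $\gamma$-regular random graph should behave like the \ER graph $G(n,\lfloor\gamma n/2\rfloor)$ with matching average degree.

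\textbf{Concentration and reduction.} For the uniform $\gamma$-regular ensemble I would obtain a Lipschitz switching martingale by exchanging a disjoint pair of edges, or by exposing half-edges in the configuration/clone representation $\Pc{\gamma}$, and apply Azuma--Hoeffding to get $\P(|U_\pm(\Rg{\gamma}) - \E U_\pm(\Rg{\gamma})| \geq \ve n)\to 0$ for every $\ve>0$. Since $\Pc{\gamma}$ is contiguous to $\Rg{\gamma}$ for any fixed $\gamma$, leading-order asymptotics at the $o_\gamma(\sqrt{\gamma})$ scale transfer freely between the two ensembles, so I would identify the desired limit while working with $\Pc{\gamma}$ and then transfer back.

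\textbf{Interpolation.} I would set up the interpolation exactly as in the \ER proof but starting from $\Pc{\gamma}$: define a path $t \mapsto H_t(\us)$ connecting $H_{\Pc{\gamma}}$ to a scaled SK Hamiltonian $-n^{-1/2}\sum_{i<j} J_{ij}\sigma_i\sigma_j$, variance-matched at the two-point level (the natural scaling makes the SK ground state contribute precisely $-\Par_*\sqrt{\gamma}$ per vertex). Differentiating $n^{-1}\E[\max_{\us\in\Omega_n} H_t(\us)]$ in $t$, the resulting expression is controlled by overlap estimates as in Section \ref{interpolation}. The equipartition constraint would be removed last by the same argument as in the ER case: the magnetization of the SK optimizer is $o(n)$ w.h.p., so restricting to $\Omega_n \subset \{-1,+1\}^n$ costs at most $o_\gamma(\sqrt{\gamma})\cdot n$ in the objective.

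\textbf{Main obstacle.} The chief difficulty is that the interpolation for $G(n,[\gamma n])$ crucially exploits the \emph{independent} Bernoulli/Poisson nature of edges, through a clean match between Poisson and Gaussian second cumulants. In the configuration model the constraint that each vertex has exactly $\gamma$ half-edges induces correlations that do not disappear in the local limit. I anticipate handling this by a two-stage interpolation $\Pc{\gamma}\leadsto G(n,\lfloor\gamma n/2\rfloor)\leadsto \mathrm{SK}$, in which the first leg is bounded by a careful coupling between a typical clone realization and a Poisson graph of matching mean. Note the crude bound on their symmetric edge difference is $\Theta(n\sqrt{\gamma})$, which is too weak, so the argument must exploit cancellations in the Hamiltonian rather than bounding its $\ell^1$ perturbation. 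This is the single place where the $\gamma\to\infty$ regime is genuinely needed to transfer from the \ER to the regular case, and where the Poisson/Gaussian cumulant-matching philosophy of the paper will have to be pushed hardest.
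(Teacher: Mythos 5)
Your high-level frame---reduce to $U_\pm(\Rg{\gamma})$ via $\mcut=|E|/2+U_-/2$, concentrate via a switching or configuration-model martingale, and transfer the leading asymptotics from the \ER case---is correct, and you have identified the real obstacle: a crude bound on the symmetric edge difference between $\Rg{\gamma}$ and an \ER graph of matching mean is $\Theta(n\sqrt{\gamma})$, exactly the scale you must beat. But the proposal does not supply a mechanism that beats it, and it contains a factual error. The Poisson-cloning model $\Pc{\gamma}$ has $\dPois$ degrees and is contiguous to the \ER graph $G_I(n,\gamma/n)$, \emph{not} to the $\gamma$-regular ensemble $\Rg{\gamma}$; their degree sequences are mutually singular, so no contiguity argument transfers the asymptotics from $\Pc{\gamma}$ to $\Rg{\gamma}$. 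Beyond that, ``exploit cancellations in the Hamiltonian'' and the ``two-stage interpolation $\Pc{\gamma}\leadsto G(n,[\gamma n/2])\leadsto\mathrm{SK}$'' are wishes, not arguments: the Poisson add-one identity $g'(\lambda)=\E[f(z+1)-f(z)]$ that drives the derivative formula \eqref{eq:der-D} has no analogue once edge multiplicities are constrained by a fixed degree sequence, and you give no reason why the $t$-derivative of an interpolating free energy connecting the configuration model to an independent-edge model should be $o_\gamma(\sqrt{\gamma})$.

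The paper takes a different route: a graph-surgery comparison, not another interpolation. Set $\gamma_-=\gamma-\sqrt{\gamma}\log\gamma$, draw i.i.d.\ $X_i\sim\dPois(\gamma_-)$, color $Z_i=(\gamma-X_i)_+$ of each vertex's $\gamma$ half-edges {\small\sf BLUE} and the rest {\small\sf RED}, and after the uniform matching forming $G_1\sim\Rg{\gamma}$ write $G_1=\Grr\cup\Grb\cup\Gbb$. Deleting the blue half-edges and uniformly re-matching the freed red half-edges produces $G_2=\Grr\cup\tGrr$, which (by a pairing lemma and a coupling to $\Pc{\gamma_-}$) differs from an \ER graph of average degree $\gamma_-$ by only $n\,o_\gamma(\sqrt{\gamma})$ edges, so Theorem~\ref{ER} controls $G_2$. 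The decisive point is then the deterministic identity $\cut_{G_1}(\us)=\cut_{G_2}(\us)-\cut_{\tGrr}(\us)+\cut_{\Grb\cup\Gbb}(\us)$ on $\Omega_n$: the means $\E[\cut_{\Grb\cup\Gbb}]$ and $\E[\cut_{\tGrr}]$ differ by exactly $n\sqrt{\gamma}\log\gamma/4+n\,O_\gamma(1)$, which is precisely what absorbs the $\gamma\mapsto\gamma_-$ shift (Lemma~\ref{expectedcut}), while the fluctuations of both terms around their means are $O(n\gamma^{1/4}\sqrt{\log\gamma})=n\,o_\gamma(\sqrt{\gamma})$ \emph{uniformly over $\us\in\Omega_n$} (Lemma~\ref{uniform_bound}). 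That uniform bound is the nontrivial step: it is a $2^n$ union bound against Azuma--Hoeffding for $\cut_{\mathcal A}(\us)$ around its $\Deg$-conditional mean, along the martingale that sequentially exposes the roughly $n\sqrt{\gamma}\log\gamma$ blue half-edge pairings; the exponent wins because the rewired subgraph has only $\sim n\sqrt{\gamma}\log\gamma$ edges rather than $\Theta(n\gamma)$. Without an ingredient playing this role, your proposal does not close the gap you correctly flagged.
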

The average degree in an \ER graph $G(n, [\gamma n])$ is $2\gamma$ 
so Theorems \ref{ER} and \ref{d-reg} take the same form in terms of average degree. 
However, moving from \ER graphs to regular random graphs having the same
number of edges is non-trivial, since the fluctuation of the degree of a
typical vertex in an \ER graph is $\Theta_{\gamma}(\sqrt{\gamma})$. 
Hence, any coupling of these two graph model yields about
$n\sqrt{\gamma}$ different edges, and merely bounding the difference 
in cut-size by the number of different edges, results in 
the too large $\sqrt{\gamma}$ spread. Instead, as detailed in Section \ref{comparison}, our proof of Theorem \ref{d-reg} relies on a 
delicate construction which ``embeds" an \ER graph of average
degree slightly smaller than $\gamma$, into a $\gamma$-regular random 
graph while establishing 
that the fluctuations in the contribution of the additional edges is
only $n o_{\gamma}(\sqrt{\gamma})$. 

Our next result, whose proof is provided in Section \ref{maxcut},
shows that upto the first order,
the asymptotic of the Max-Cut matches that of the Max bisection 
for both \ER and random regular graphs. 
\begin{theo}
\label{maxcut_thm}$~$
\newline
(a) W.h.p. as $n\to \infty$, we have,
\begin{align}
\frac{\MaxCut(G(n, [\gamma n]))}{n} =  \frac{\gamma}{2} + \Par_* \sqrt{\frac{\gamma}{2}} + o_{\gamma}(\sqrt{\gamma}). \nonumber 
\end{align}
(b)
W.h.p. as $n\to \infty$, we have,
\begin{align}
\frac{\MaxCut({\Rg \gamma})}{n} = \frac{\gamma}{4} + \Par_* \sqrt{\frac{\gamma}{4}} + o_{\gamma}(\sqrt{\gamma}). \nonumber 
\end{align}
\end{theo}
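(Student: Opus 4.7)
The plan is to establish Theorem \ref{maxcut_thm} via matching upper and lower bounds, with the lower bound essentially free from the previous theorems and all the work focused on the upper bound. Since every bisection is a valid cut, one has $\MaxCut(G) \geq \Mcut(G)$ for every graph $G$, so for part (a) the lower half of the claimed estimate is immediate from Theorem \ref{ER}, and for part (b) it follows from Theorem \ref{d-reg}.

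For the matching upper bound, I would rerun the Guerra--Toninelli style interpolation that underlies the proofs of Theorems \ref{ER} and \ref{d-reg}, but on the full hypercube rather than on $\Omega_n$. Writing
\begin{equation*}
\MaxCut(G) \;=\; \tfrac{1}{2}|E| \;+\; \tfrac{1}{2}\max_{\us \in \{-1,+1\}^n} H_G(\us)\, ,
\end{equation*}
the target becomes
\begin{equation*}
\frac{1}{n}\,\E\bigl[\max_{\us \in \{-1,+1\}^n} H_G(\us)\bigr] \;\leq\; \Par_*\sqrt{2\gamma} \;+\; o_\gamma(\sqrt{\gamma})
\end{equation*}
for $G = G(n,[\gamma n])$, and the analogous bound with $\sqrt{\gamma}$ in place of $\sqrt{2\gamma}$ for $G = \Rg \gamma$. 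The comparison between $H_G$ and an appropriately scaled $H^{\SK}$ is carried out by differentiating an interpolating free energy in the time parameter and applying Gaussian (respectively Poisson) integration by parts; none of these steps is sensitive to the support of the Boltzmann measure, so the inequality obtained in the proofs of Theorems \ref{ER} and \ref{d-reg} for the $\Omega_n$-restricted partition function applies verbatim to the unrestricted one. Taking the zero-temperature limit via \eqref{SK_groundstate} yields the displayed bound, and inserting it in the expression for $\MaxCut(G)$ produces the claimed estimate. In effect, the argument here is a strict simplification of the proofs of Theorems \ref{ER} and \ref{d-reg}: it entirely bypasses the step that removed the equipartition constraint on the SK side.

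To upgrade the expectation bounds to w.h.p.\ statements, I would invoke standard martingale concentration. The map $G \mapsto \MaxCut(G)$ is $1$-Lipschitz under single-edge alteration in $G(n,[\gamma n])$, so Azuma--Hoeffding applied to the edge-exposure martingale yields concentration at scale $O(\sqrt{n}) = o(n\sqrt{\gamma})$. For $\Rg \gamma$, the same conclusion follows from the standard two-edge switching martingale, which is again $O(1)$-Lipschitz per switch.

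The main obstacle is really just careful bookkeeping: one has to verify that every step of the comparison chain in the proofs of Theorems \ref{ER} and \ref{d-reg}, including the Poisson--Gaussian coupling used on the sparse graph side and the delicate ``embedding'' of an \ER graph into $\Rg \gamma$, continues to hold when the Gibbs sum is taken over all of $\{-1,+1\}^n$ instead of only over $\Omega_n$. Since the equipartition constraint was exploited only in the final step of those proofs (passing from the constrained SK ground state to the unconstrained one via \eqref{SK_groundstate}), dropping it here simplifies rather than complicates the argument; nonetheless this is the place where the interpolation identities must be rechecked line by line.
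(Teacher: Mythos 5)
The lower bound $\MaxCut(G)\geq\Mcut(G)$ is correct and is also how the paper starts, but your upper bound has a genuine gap. You assert that the interpolation estimate of Proposition~\ref{lemma:LargeDegree} ``applies verbatim'' when the Gibbs sum is taken over all of $\{-1,+1\}^n$, and that ``none of these steps is sensitive to the support of the Boltzmann measure.'' This is false: in the proof of Proposition~\ref{lemma:LargeDegree}, immediately after \eqref{eq:der-D}, the $\ell=1$ term $-\gamma\tanh(\beta/\sqrt{2\gamma})\,\E[\langle Q_1^2\rangle_t]$ is discarded \emph{precisely because} $Q_1\equiv 0$ on $\Omega_n$. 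On the full hypercube this term is present and, after dividing by $\beta$ as in Lemma~\ref{bound}, contributes an error of order $\sqrt{\gamma}\,\E[\langle Q_1^2\rangle_t]$ to the ground-state comparison --- which is $\Theta_\gamma(\sqrt{\gamma})$ unless one separately proves $\E[\langle Q_1^2\rangle_t]=o_\gamma(\gamma^{-1/2})$ uniformly along the interpolation path; no such control is available (for $\beta>0$, the dilute model at $t=0$ is a ferromagnet in its ordered phase with $\E[\langle Q_1^2\rangle]=\Theta(1)$). Your strategy can in fact be salvaged for part (a): for $\beta<0$, the omitted term $-\gamma\tanh(b)\E[\langle Q_1^2\rangle_t]$ is \emph{nonnegative}, so dropping it yields a valid one-sided lower bound on $\partial_t\phi_n$ and hence the one-sided inequality $\phi_n^{\D}(\beta/\sqrt{2\gamma},\gamma)\leq\phi_n^{\SK}(\beta)+A_1|\beta|^3/\sqrt{\gamma}+A_2\beta^4/\gamma$, which is exactly what the upper bound needs. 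But that is a real observation you would have to make and prove, not a verbatim transfer, and the equipartition constraint was most certainly \emph{not} used ``only in the final step''.

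Part (b) has an independent problem: there is no interpolation to ``rerun'' for $\Rg\gamma$. The Poisson identity $g'(\lambda)=\E[f(z+1)-f(z)]$ driving \eqref{eq:der-D-temp2} has no analogue for the configuration model with fixed degrees, and the paper accordingly proves Theorem~\ref{d-reg} not by interpolation but by graph comparison (Lemmas~\ref{Grr}--\ref{uniform_bound}); those lemmas are tied to $\Omega_n$ --- in particular the uniformity over $\us$ of $\E[\cut_{\Grb}(\us)]$ etc.\ in Lemma~\ref{expectedcut} uses that $\us$ is balanced, and would need to be redone for general cuts. The paper sidesteps both obstacles entirely: it takes a maximizing configuration $\us^\star$, shows w.h.p.\ that $|m(\us^\star)|\leq\gamma^{-\delta}n$ (else $\cut_{G_n}(\us^\star)<n\gamma/4\leq\MaxCut(G_n)$, by a binomial/hypergeometric tail bound on the cut of a badly unbalanced partition, cf.~\eqref{eq:second-ge}), then flips $m(\us^\star)$ spins to produce a bisection $T(\us^\star)$ while controlling the resulting change $Y(\us^\star)-Z(\us^\star)$ in cut size by the large-deviation estimate \eqref{eq:ld-ge-thm}, obtaining $\MaxCut(G_n)\leq\Mcut(G_n)+n\,o_\gamma(\sqrt{\gamma})$ w.h.p.\ for both ensembles simultaneously. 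This reduction-to-$\Mcut$ route is both cleaner and handles the regular case for free; you should consider it instead of trying to push the interpolation through.
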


\subsection{Application to community detection}

As a simple illustration of the potential applications of
our results, we consider the problem of detecting communities
within the so called `planted partition model', or stochastic block
model. Given parameters $a>b>0$ and even $n$, we denote by 
$G_I(n,a/n,b/n)$ the random graph over vertex set $[n]$, 
such that given a uniformly random balanced partition
$[n] = V_1\cup V_2$, edges $(i,j)$ are independently 
present with probability $a/n$ when either both $i,j\in V_1$ 
or both $i,j\in V_2$, or alternatively present with probability $b/n$ if 
either $i\in V_1$ and $j\in V_2$, or vice versa. Given a random
graph $G$, the community detection problem requires us to 
determine whether the null hypothesis $H_0: G \sim G_I(n,(a+b)/(2n))$ holds,
or the alternative hypothesis $H_1: G\sim G_I(n,a/n,b/n)$ holds.

Under the alternative hypothesis the cut size of the balanced
partition $(V_1,V_2)$ concentrates tightly around $nb/4$. This 
suggests the optimization-based hypothesis testing 
\begin{align}
T_{{\sf cut}}(G;\theta) = \begin{cases}
0 & \mbox{if $\mcut(G)\le \theta$,}\\
1 & \mbox{otherwise}
\end{cases}
\end{align}
and we have the following immediate consequence of Theorem \ref{ER}.
\begin{coro}\label{cor-test}
Let $\theta_n=(b/4)+\eps_n$ with
$\eps_n \sqrt{n}\to\infty$.
Then, the test $T_{{\sf cut}}(\,\cdot\,;\theta_n)$ succeeds
w.h.p. as $n \to \infty$, provided 
$(a-b)^2\ge 8\Par_*^2 (a+b) + o(a+b)$. 
\end{coro}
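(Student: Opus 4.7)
The plan is to show that $\mcut(G)$ falls on opposite sides of $n\theta_n$ under the two hypotheses w.h.p., so that $T_{\sf cut}(\,\cdot\,;\theta_n)$ correctly distinguishes $H_0$ from $H_1$. This reduces to two concentration arguments, one per hypothesis, and the heart of the matter is a direct application of Theorem \ref{ER}.

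For $H_1$, I would take the planted balanced partition $(V_1,V_2)$ as an explicit bisection of $G$. Its cut size $X$ is a sum of $|V_1||V_2|=n^2/4$ i.i.d.\ $\mathrm{Bernoulli}(b/n)$ variables, so $\E X = nb/4$ and $\mathrm{Var}(X) = O(n)$. By Chebyshev's inequality, $X \leq nb/4 + n\epsilon_n$ w.h.p.\ whenever $n\epsilon_n \gg \sqrt{n}$, which is precisely the hypothesis $\epsilon_n\sqrt{n}\to\infty$. Consequently, $\mcut(G) \leq X \leq n\theta_n$ w.h.p.

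For $H_0$, by Remark \ref{er_equivalence} the graph $G \sim G_I(n,(a+b)/(2n))$ has the same asymptotic $\mcut$ as $G(n,[(a+b)n/4])$, and Theorem \ref{ER} with $\gamma = (a+b)/4$ then gives
\begin{equation*}
\frac{\mcut(G)}{n} = \frac{a+b}{8} - \Par_*\sqrt{\frac{a+b}{8}} + o\bigl(\sqrt{a+b}\bigr)
\end{equation*}
w.h.p. The hypothesis $(a-b)^2 \geq 8\Par_*^2(a+b) + o(a+b)$ is, after taking square roots and dividing by $8$, equivalent to $(a-b)/8 \geq \Par_*\sqrt{(a+b)/8} + o(\sqrt{a+b})$, whence substitution yields $\mcut(G)/n \geq b/4 + o(\sqrt{a+b})$ w.h.p. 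Since the test is monotone in $\epsilon_n$ (larger $\epsilon_n$ only makes detection easier), I may WLOG assume $\epsilon_n = o(\sqrt{a+b})$, in which case $\mcut(G) > n\theta_n$ w.h.p.

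The main hurdle is shallow: the corollary is essentially immediate from Theorem \ref{ER} combined with Chebyshev. The only subtlety requiring care is the bookkeeping of the two separate $o(\cdot)$ terms — the $o_\gamma(\sqrt{\gamma})$ error from Theorem \ref{ER} and the $o(a+b)$ slack in the hypothesis — so that the gap $(a-b)/8 - \Par_*\sqrt{(a+b)/8}$ granted by the condition strictly dominates them both as $a+b\to\infty$.
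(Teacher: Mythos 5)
The paper declares Corollary \ref{cor-test} an ``immediate consequence of Theorem \ref{ER}'' and gives no proof, so your argument supplies exactly what is intended: bound $\mcut(G)$ from above under $H_1$ by the cut of the planted bisection and a Chebyshev estimate, and from below under $H_0$ via Remark \ref{er_equivalence} plus Theorem \ref{ER} at $\gamma=(a+b)/4$. Both halves are correct, and the algebra showing that $(a-b)^2\ge 8\Par_*^2(a+b)+o(a+b)$ is equivalent to $(a-b)/8\ge \Par_*\sqrt{(a+b)/8}+o(\sqrt{a+b})$ is right.

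The one misstep is the claim ``the test is monotone in $\eps_n$ (larger $\eps_n$ only makes detection easier), so WLOG $\eps_n=o(\sqrt{a+b})$.'' That monotonicity is false: raising $\eps_n$ raises $\theta_n$, which makes the $H_1$ requirement $\mcut(G)\le n\theta_n$ easier but makes the $H_0$ requirement $\mcut(G)> n\theta_n$ harder, so a reduction to smaller $\eps_n$ cannot be justified this way. What you actually need, and what the corollary tacitly permits since $a,b$ are fixed while $\eps_n$ is a free sequence constrained only by $\eps_n\sqrt{n}\to\infty$, is $\eps_n\to 0$. Then under $H_0$ the quantity $\mcut(G)/n$ concentrates near the constant $(a+b)/8-\Par_*\sqrt{(a+b)/8}+o(\sqrt{a+b})$, which your hypothesis makes strictly larger than $b/4$ (for $a+b$ large); since $\eps_n\to 0$, eventually $\mcut(G)/n>b/4+\eps_n$ w.h.p. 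Replacing the monotonicity sentence with this observation closes the only gap; the rest of the proof stands.
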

Let us stress that we did not provide an efficient algorithm for
computing $T_{{\sf cut}}$ (but see \cite{montanari2015semidefinit} for related work that uses
polynomially computable convex relaxations). By contrast, there exist 
polynomially computable tests that succeed w.h.p.
whenever $(a-b)^2>2(a+b)$ and no test can succeed
below this threshold (see
\cite{decelle2011asymptotic,mossel2013proof,massoulie2014community}). 
Nevertheless, the test  $T_{{\sf cut}}$ is so natural that its
analysis is of independent interest, and Corollary \ref{cor-test}
implies that $T_{{\sf cut}}$ is  
sub-optimal by a factor of at most $4\Par_*^2\approx 2.33$.

\section{Interpolation: Proof of Theorem \ref{ER}}
\label{interpolation}
The \ER random graph $G(n,m)$ considers a uniformly chosen 
element from among all \emph{simple} (i.e. having no loops or double edges),
graphs of $n$ vertices and $m$ edges. For $m=[\gamma n]$ and $\gamma$ 
bounded, such simple graph differs in only $O(1)$ edges
from the corresponding multigraph which makes a uniform choice 
while allowing for loops and multiple edges. 
Hence the two models are equivalent for our purpose,
and letting $G(n,[\gamma n])$ denote hereafter the latter multigraph, 
we note that it can be constructed also by sequentially introducing 
the $[\gamma n]$ edges and independently sampling their end-points 
from the uniform distribution on $\{1,\cdots, n\}$. 
We further let $\Pg \gamma$ denote the 
Poissonized random multigraph $G(n,N_n)$ having
the random number of edges $N_n \sim \dPois(\gamma n)$, 
independently of the choice of edges.
Alternatively, one constructs $\Pg \gamma$ by generating 
for $1\leq i,j \leq n$ the i.i.d $z_{ij} \sim \dPois(\frac{\gamma}{n})$ 
and forms the multi-graph on $n$ vertices by taking 
$(z_{ij}+z_{ji})$ as the multiplicity of each edge $(i,j), i \neq j$
(ending with multiplicity $z_{(i,j)} \sim \dPois( \frac{2\gamma}{n})$
for edge $(i,j)$, $i \ne j$ and the multiplicity 
$z_{(i,i)} \sim \dPois ( \frac{\gamma}{n})$
for each loop $(i,i)$, where  
$\{z_{(i,j)}, i < j, z_{(i,i)} \}$ are mutually independent). 
By the tight concentration of the $\dPois(\gamma n)$ law, 
it suffices to prove Theorem \ref{ER} for $\Pg \gamma$, and 
in this section we always take for $G_n$ a random multi-graph 
distributed as $\Pg \gamma$. 

%
\subsection{Spin models and free energy} 
\label{spin_models}
A spin model is defined by the (possibly random) Hamiltonian
$H:\{-1,+1\}^n\to\reals$ and in this paper we often consider spin models 
constrained to have zero empirical magnetization, namely from the set
$\Omega_n=\{ \us \in \{-1,+1\}^n: \sum_{i=1}^n \sigma_i=0\}$. The constrained partition function is then 
$Z_n(\beta) = \sum_{\us \in \Omega_n} e^{-\beta H(\us)}$ with the 
corresponding constrained free energy density 
\begin{eqnarray}\label{eq:def-free-eng}
\phi_n(\beta) \equiv \frac{1}{n} \; \E [ \log Z_n (\beta) ] =
\frac{1}{n}\;\E \Big[\;\log\big\{
\sum_{\us\in \Omega_n}e^{-\beta H(\us)}\big\} \Big]\,.
\end{eqnarray}
The expectation in \eqref{eq:def-free-eng} is over the distribution of the 
function $H(\,\cdot\,)$ (i.e. over
the collection of random variables $\{H(\us)\}$). Depending on the model under consideration, the Hamiltonian (or the free energy) might depend on additional parameters
which we will indicate, with a slight abuse of notation, as 
additional arguments of $\phi_n(\,\cdot\,)$. 

For such spin models we also consider the 
expected ground state energy density
\begin{eqnarray}\label{eq:def-e}
e_n = \frac{1}{n}\,\E [\,\min_{\us\in \Omega_n} \, H(\us)\,]\,,
\end{eqnarray}
which determines the large-$\beta$ behavior
of the free energy density. That is, $\phi_n(\beta) = -\beta\, e_n +o(\beta)$.
We analogously define the maximum energy 
\begin{eqnarray}\label{eq:def-ehat}
\widehat{e}_n = \frac{1}{n}\,\E [\,\max_{\us\in \Omega_n} \, H(\us)\,]\,,
\end{eqnarray}
which governs the behavior of the free energy density as 
$\beta\to -\infty$. That is, $\phi_n(\beta) = -\beta\, \widehat{e}_n +o(\beta)$
(in statistical mechanics it is more customary to change
the sign of the Hamiltonian in such a way that $\beta$ is kept positive).
The corresponding Boltzmann measure on $\Omega_n$ is 
\begin{eqnarray}\label{eq:Bolt-meas}
\mu_{\beta,n}(\us) =\frac{1}{Z_n(\beta)} \exp\{-\beta H(\us)\}\,.
\end{eqnarray}
A very important example of a spin model, 
that is crucial for our analysis is the SK model
having the Hamiltonian $H^{\SK}(\cdot)$ of \eqref{eq:def-SK-hamilton} 
on $\{-1,+1\}^n$ and we also consider that model
constrained to $\Omega_n$ (i.e. subject to zero magnetization constraint).

The second model we consider is the `dilute' ferromagnetic Ising model 
on $\Pg \gamma = (V,E)$, corresponding to the Hamiltonian
\begin{eqnarray}
H^{\D}_{\gamma}(\us) = -\sum_{(i,j)\in E} \sigma_i\sigma_j\,,
\label{eq:DilutedHamiltonian}
\end{eqnarray}
again restricted to $\us \in \Omega_n$.
We use superscripts to indicate the model to which various quantities 
refer. For instance $\phi^{\SK}_n(\beta)$ denotes 
the constrained free energy of the SK model,  
$\phi^{\D}_n(\beta;\gamma)$ is the constrained 
free energy of the Ising model on $\Pg \gamma$,  
with analogous notations used for
the ground state energies $e^{\SK}_n$ and $e^{\D}_n(\gamma)$.

\medskip
The first step in proving Theorem \ref{ER} is to show 
that $\mcut(G_n)$ and $\Mcut(G_n)$ are 
concentrated around their expectations. 
\begin{lemma}
\label{concentration}
Fixing $\ve>0$, we have that
\begin{align*}
\P\left[ \Big| \mcut(G_n) - \E[\mcut(G_n)]\Big| >
  n\ve\right]&=O(1/n) \,\\
   \P\left[ \Big| \Mcut(G_n) - \E[\Mcut(G_n)]\Big| >
  n\ve\right]&=O(1/n) \,.
\end{align*} 
\end{lemma}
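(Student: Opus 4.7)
The plan is to exploit the independent-Poisson edge-multiplicity representation of $G_n \sim \Pg{\gamma}$ recalled in the paragraph preceding the lemma, combined with the Efron--Stein variance inequality. Recall that $G_n$ can be built from the independent family $\{z_{(i,j)}\}_{1 \le i \le j \le n}$ with $z_{(i,j)} \sim \mathrm{Pois}(2\gamma/n)$ for $i<j$ and $z_{(i,i)} \sim \mathrm{Pois}(\gamma/n)$, where $z_{(i,j)}$ is the multiplicity of edge $(i,j)$. The key observation is the Lipschitz estimate: if $G_n'$ is obtained from $G_n$ by replacing a single multiplicity $z_{(i,j)}$ with an independent copy $z_{(i,j)}'$ (leaving all other coordinates unchanged), then for every $\us\in\Omega_n$ the cut value of $\us$ changes by at most $|z_{(i,j)}-z_{(i,j)}'|$. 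Since $\Omega_n$ depends only on $n$, taking min (resp.\ max) over $\us$ preserves this bound, giving
\begin{equation*}
\bigl|\mcut(G_n)-\mcut(G_n')\bigr|\;\le\;|z_{(i,j)}-z_{(i,j)}'|
\qquad\text{and}\qquad
\bigl|\Mcut(G_n)-\Mcut(G_n')\bigr|\;\le\;|z_{(i,j)}-z_{(i,j)}'|\,.
\end{equation*}

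Next, I would invoke Efron--Stein. Writing $G_n^{(i,j)}$ for the graph obtained by resampling $z_{(i,j)}$, it gives
\begin{equation*}
\var\bigl(\mcut(G_n)\bigr)\;\le\;\frac{1}{2}\sum_{i\le j}\E\bigl[(\mcut(G_n)-\mcut(G_n^{(i,j)}))^2\bigr]\;\le\;\sum_{i\le j}\var(z_{(i,j)})\,,
\end{equation*}
where the second bound uses the Lipschitz step together with $\E[(z_{(i,j)}-z_{(i,j)}')^2]=2\var(z_{(i,j)})$. Summing the Poisson variances $2\gamma/n$ over the $\binom{n}{2}$ off-diagonal pairs and $\gamma/n$ over the $n$ loops yields $\var(\mcut(G_n))\le\gamma n$, and by the identical argument $\var(\Mcut(G_n))\le\gamma n$. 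Chebyshev's inequality then gives
\begin{equation*}
\P\bigl[\,|\mcut(G_n)-\E[\mcut(G_n)]|>n\ve\,\bigr]\;\le\;\frac{\gamma n}{n^2\ve^2}\;=\;\frac{\gamma}{n\ve^2}\;=\;O(1/n)\,,
\end{equation*}
and the same for $\Mcut$, matching the claimed rate.

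There is no serious obstacle here: the Poisson representation turns the defining randomness into independent coordinates whose individual variance contribution is $O(1/n)$, while the combinatorial dependence of $\mcut$ and $\Mcut$ on each coordinate is manifestly Lipschitz. The only bookkeeping is to keep track of the factor of two between off-diagonal and diagonal Poisson intensities and to note that the feasible set $\Omega_n$ is invariant under resampling of edge multiplicities. (Had one started instead from the fixed-$m$ conditional construction of $G(n,[\gamma n])$, one would need to separately handle the $\Theta(\sqrt{n})$ Poisson fluctuations of $N_n$ and combine a bounded-differences argument with a concentration bound for $N_n$; using the unconditional Poisson product structure sidesteps this.)
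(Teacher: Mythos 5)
Your proposal is correct and takes essentially the same route as the paper: independent Poisson edge-multiplicity representation, a one-coordinate Lipschitz estimate, Efron--Stein, and Chebyshev. The only cosmetic difference is that the paper first writes $\mcut(G_n)=\tfrac12 N_n + \tfrac12 U_-(G_n)$ and bounds the two pieces separately, whereas you apply Efron--Stein directly to $\mcut(G_n)$; both give $\var = O(\gamma n)$ and the same $O(1/n)$ tail.
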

\begin{proof} 
Recall \eqref{eq:rep-mMcut} that 
$\mcut(G_n) = \frac{1}{2} |E_n| + \frac{1}{2} U_{-}(G_n)$,
with $|E_n|=N_n \sim \dPois([\gamma n])$. Therefore,
\begin{align*}
\P\left[|\mcut(G_n) - \E[\mcut(G_n)]|>n\ve \right] &\leq \P[|U_{-}(G_n) - \E[U_{-}(G_n)] | > n \ve] + \P[|N_n - \E N_n| > n\ve]\nonumber \\
&\leq\frac{{\var}(U_{-}(G_n))}{n^2\ve^2} +  \frac{{\var}(N_n)}{n^2 \ve^2} 
=  \frac{{\var}(U_{-}(G_n))}{n^2\ve^2}  + O(1/n) \nonumber .
\end{align*}
We complete the proof for $\mcut(G_n)$ by showing 
that ${\var}(U_{-}(G_n)) \leq n\gamma $. 
Indeed, writing $U_{-}(G_n) = f(\mathbf{z})$ for 
$\mathbf{z}= \{z_{ij}, 1\leq i,j \leq n\}$ and i.i.d. 
$z_{ij} \sim \dPois(\gamma/n)$, we let $\mathbf{z}^{(i,j)}$ denote the 
vector formed when replacing $z_{ij}$ in $\mathbf{z}$ by an i.i.d copy $z_{ij}'$. 
Clearly $|f(\mathbf{z}) - f(\mathbf{z}^{(i,j)})| \leq |z_{ij}- z_{ij}'|$. Hence,
by the Efron-Stein inequality \cite[Theorem 3.1]{BLM},
\begin{align*}
{\var}(U_{-}(G_n)) \leq \frac{1}{2} \sum_{i,j} \E[ (f(\mathbf{z})- f(\mathbf{z}^{(i,j)}))^2] \leq \frac{1}{2} \sum_{i,j} \E[ (z_{ij}- z_{ij}')^2] \,,
\end{align*}
yielding the required bound (and the proof for 
$\Mcut(G_n)= \frac{1}{2} N_n + \frac{1}{2} U_{+}(G_n)$ proceeds along 
the same line of reasoning).
\end{proof}

Next, recall that 
$|E_n| \sim \dPois(\gamma n)$ has expectation 
$\gamma n$, while $e_n^{D}=n^{-1} \E [U_-(G_n)]$ and 
$\widehat{e}_n^D = n^{-1} \E [U_+(G_n)]$ 
(see \eqref{eq:def-e} and \eqref{eq:def-ehat}, respectively).
Hence, from the representation \eqref{eq:rep-mMcut} of 
$\mcut(G_n)$ and $\Mcut(G_n)$, we further conclude that
\begin{align}
\frac{1}{n} \E[ \mcut(G_n)] = \frac{\gamma}{2} + \frac{1}{2} e_n^{\D}(\gamma)\, , \;\;\;\;\;\;  
\frac{1}{n} \E[\Mcut(G_n)] = \frac{\gamma}{2} + \frac{1}{2} \widehat{e}_n^{\D}(\gamma)\,. 
\label{expectation_rep}
\end{align}
Combining \eqref{expectation_rep} with Lemma \ref{concentration}, 
we establish Theorem \ref{ER}, once we show that as $n \to \infty$,
\begin{align}\label{eq:enD-limit}
e_n^{\D}(\gamma) &= - \sqrt{2 \gamma} \Par_* + o_{\gamma}(\sqrt{\gamma}) + o(1), \\
\widehat{e}_n^{\D}(\gamma) &=  + \sqrt{2 \gamma} \Par_* + o_{\gamma}(\sqrt{\gamma}) + o(1).
\label{eq:enhatD-limit}
\end{align}
Establishing \eqref{eq:enD-limit} and \eqref{eq:enhatD-limit} is the 
main step in proving Theorem \ref{ER}, and the key to it is the 
following proposition of independent interest.
\begin{propo}\label{lemma:LargeDegree}
There exist constants $A_1, A_2<\infty$ independent of 
$n$, $\beta$ and $\gamma$ such that 
\begin{eqnarray}
\left|\phi_n^{\D}\left(\frac{\beta}{\sqrt{2\gamma}},\gamma\right)-\phi_n^{\SK}(\beta)
\right|\le 
A_1\,\frac{|\beta|^3}{\sqrt{\gamma}}+
A_2\, \frac{\beta^4}{\gamma}\, .\label{eq:BasicInterpolation}
\end{eqnarray}
\end{propo}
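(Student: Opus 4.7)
The plan is a Guerra-Toninelli style interpolation between the diluted Ising model on $\Pg\gamma$ and the SK model, both constrained to $\us\in\Omega_n$. Writing $a\equiv \beta/\sqrt{2\gamma}$, let $E^{(t)}$ be a Poisson multigraph on $[n]$ with edge intensity $t\gamma n$ and i.i.d.\ uniform endpoints on $\{1,\dots,n\}$, let $\{J_{ij}\}_{i,j=1}^n$ be i.i.d.\ standard Gaussians independent of $E^{(t)}$, and define for $t\in[0,1]$
\begin{equation*}
Z_n(t)=\sum_{\us\in\Omega_n}\exp\Bigl\{a\!\!\!\sum_{(i,j)\in E^{(t)}}\!\!\!\sigma_i\sigma_j\;+\;\beta\sqrt{\tfrac{1-t}{2n}}\sum_{i,j=1}^n J_{ij}\sigma_i\sigma_j\Bigr\},\qquad \phi_n(t)=\tfrac{1}{n}\E\log Z_n(t).
\end{equation*}
Then $\phi_n(0)=\phi_n^{\SK}(\beta)$ and $\phi_n(1)=\phi_n^{\D}(a,\gamma)$, so it suffices to show $|\phi_n'(t)|\le A_1|\beta|^3/\sqrt\gamma+A_2\beta^4/\gamma$ uniformly in $t$.

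\textbf{Differentiation and exact cancellation.} For the Gaussian piece, Stein's lemma together with $\sigma_i^2=1$ yields
\begin{equation*}
\phi_n'(t)\big|_{\rm G}=-\frac{\beta^2}{4}\bigl(1-\E\<q_{12}^2\>_t\bigr),
\end{equation*}
with $q_{12}=n^{-1}\sum_i\sigma_i^{(1)}\sigma_i^{(2)}$ the overlap of two Gibbs-independent replicas at time $t$. For the Poisson piece, inserting a new edge at rate $\gamma n$ with uniform endpoints $(i,j)$ multiplies $Z_n(t)$ by $\<e^{a\sigma_i\sigma_j}\>_t=\cosh a+\sinh a\,\<\sigma_i\sigma_j\>_t$, so
\begin{equation*}
\phi_n'(t)\big|_{\rm P}=\frac{\gamma}{n^2}\sum_{i,j=1}^n\E\log\bigl(\cosh a+\sinh a\,\<\sigma_i\sigma_j\>_t\bigr).
\end{equation*}
Using $\log\cosh a=a^2/2+O(a^4)$, $\tanh a=a+O(a^3)$, and $\log(1+u)=u-u^2/2+O(u^3)$ on $|u|\le|a|$, the integrand becomes
\begin{equation*}
\log\<e^{a\sigma_i\sigma_j}\>_t=\frac{a^2}{2}+a\<\sigma_i\sigma_j\>_t-\frac{a^2}{2}\<\sigma_i\sigma_j\>_t^2+O\bigl(|a|^3+a^4\bigr).
\end{equation*}
The linear term drops upon summation because of the zero-magnetization constraint: $\sum_{i,j}\<\sigma_i\sigma_j\>_t=\bigl\<(\sum_i\sigma_i)^2\bigr\>_t=0$ on $\Omega_n$. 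Combining with $n^{-2}\sum_{i,j}\<\sigma_i\sigma_j\>_t^2=\E\<q_{12}^2\>_t$ and $\gamma a^2/2=\beta^2/4$, the leading Poisson contribution equals $+\frac{\beta^2}{4}(1-\E\<q_{12}^2\>_t)$, which \emph{exactly cancels} the Gaussian piece. The residue is bounded by $\gamma\cdot O(|a|^3+a^4)=O(|\beta|^3/\sqrt\gamma+\beta^4/\gamma)$, and integrating in $t$ delivers \eqref{eq:BasicInterpolation}.

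\textbf{Main obstacle.} The delicate point is making the $t$-differentiation rigorous on the Poisson side, since $E^{(t)}$ is a pure-jump process. I would handle this by conditioning on the process up to time $t$ and analyzing $\phi_n(t+h)-\phi_n(t)$ via the superposition property of Poisson processes (one new edge appearing in $(t,t+h]$ with probability $\gamma n h+O(h^2)$, and at least two with probability $O(h^2)$), dominating the error by the uniform estimate $|\log\<e^{a\sigma_i\sigma_j}\>_t|\le|a|$. Once pointwise differentiability in $t$ is established, the algebraic cancellation powered by the constraint $\us\in\Omega_n$ and the uniform Taylor remainder yield the claim with absolute constants $A_1, A_2$ that do not depend on $n$, $\beta$, or $\gamma$.
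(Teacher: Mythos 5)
Your proposal is correct and follows essentially the same Guerra--Toninelli route as the paper: interpolate between the diluted and SK Hamiltonians on $\Omega_n$, compute the $t$-derivative via Gaussian integration by parts on the SK side and via the Poisson insertion identity $g'(\lambda)=\E[f(z+1)-f(z)]$ on the diluted side, exploit $Q_1=0$ on $\Omega_n$ together with the leading-order cancellation of the $\ell=2$ overlap term against $\frac{\beta^2}{4}(1-\E\langle Q_2^2\rangle_t)$, and bound the remainder by uniform Taylor estimates of size $\gamma\cdot O(|a|^3+a^4)$ with $a=\beta/\sqrt{2\gamma}$. The only cosmetic differences are the reversed direction of the interpolation parameter $t$ and your phrasing of the Poisson derivative via the continuous-time superposition argument, which the paper instead obtains by differentiating the Poisson pmf directly; both are rigorous instantiations of the same identity.
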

We defer the proof of Proposition \ref{lemma:LargeDegree}
to Subsection \ref{section_bound}, where we also apply it to 
deduce the next lemma, comparing the ground 
state energy of a dilute Ising ferromagnet to that of the SK model,
after both spin models have been constrained to have zero magnetization. 
%
\begin{lemma}
\label{bound}
There exist $A=A(\gamma_0)$ finite, such that for all 
$\gamma \ge \gamma_0$ and any $n$,
\begin{equation}\label{eq:en-ehn-bds}
\Big| \frac{e_n^{\D} (\gamma)}{\sqrt{2\gamma}} - e_n^{\SK} \Big| \leq 
A \gamma^{-1/6} \,, 
\qquad 
\Big| \frac{\widehat{e}_n^{\D} (\gamma)}{\sqrt{2\gamma}} + e_n^{\SK} \Big| \leq 
A \gamma^{-1/6} \,.
\end{equation}
\end{lemma}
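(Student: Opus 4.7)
The plan is to bootstrap from the free-energy comparison of Proposition~\ref{lemma:LargeDegree} to a ground-state comparison by taking $|\beta|$ large and then optimizing the size of $\beta$ as a power of $\gamma$.

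\emph{Free-energy/ground-state sandwich.} For any Hamiltonian $H$ on $\Omega_n$ and any $\beta>0$, the trivial bounds $e^{-\beta \min H} \le \sum_{\us\in\Omega_n}e^{-\beta H(\us)} \le 2^n\, e^{-\beta \min H}$ yield, after taking logs, dividing by $n$ and taking expectations,
\begin{equation*}
-\beta\, e_n \;\le\; \phi_n(\beta)\;\le\; -\beta\, e_n + \log 2\,,
\end{equation*}
hence $|\phi_n(\beta)+\beta e_n|\le \log 2$. The same argument applied to $-H$ gives $|\phi_n(\beta)+\beta\, \widehat e_n|\le \log 2$ for $\beta<0$.

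\emph{Combining with Proposition~\ref{lemma:LargeDegree}.} Specialising the $\beta>0$ sandwich to $\phi_n^{\D}(\,\cdot\,,\gamma)$ at inverse temperature $\beta/\sqrt{2\gamma}$ and to $\phi_n^{\SK}(\,\cdot\,)$ at inverse temperature $\beta$, then applying Proposition~\ref{lemma:LargeDegree} and the triangle inequality, one obtains
\begin{equation*}
\Big|\tfrac{\beta}{\sqrt{2\gamma}}\, e_n^{\D}(\gamma) - \beta\, e_n^{\SK}\Big| \;\le\; 2\log 2 + A_1\,\frac{|\beta|^{3}}{\sqrt{\gamma}} + A_2\,\frac{\beta^{4}}{\gamma}\,.
\end{equation*}
Dividing by $\beta>0$ gives
\begin{equation*}
\Big|\frac{e_n^{\D}(\gamma)}{\sqrt{2\gamma}} - e_n^{\SK}\Big| \;\le\; \frac{2\log 2}{\beta} + A_1\,\frac{\beta^{2}}{\sqrt{\gamma}} + A_2\,\frac{\beta^{3}}{\gamma}\,.
\end{equation*}

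\emph{Optimizing $\beta$.} Balancing the first two terms asks for $\beta^3\asymp\sqrt{\gamma}$, so I take $\beta=\gamma^{1/6}$. Then the first and second terms are each $\Theta(\gamma^{-1/6})$ while the third is only $O(\gamma^{-1/2})$, so for $\gamma\ge\gamma_0$ the right-hand side is at most $A(\gamma_0)\,\gamma^{-1/6}$, which is the first inequality in \eqref{eq:en-ehn-bds}.

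\emph{The $\widehat e_n$ bound.} I repeat the argument with $\beta<0$. The Step~1 sandwich now bounds $|\phi_n(\beta)+\beta \widehat e_n|$, and the right-hand side of Proposition~\ref{lemma:LargeDegree} is symmetric in the sign of $\beta$ since it involves only $|\beta|^3$ and $\beta^4$. One key input is that the SK Hamiltonian \eqref{eq:def-SK-hamilton} is invariant in distribution under $J_{ij}\mapsto -J_{ij}$, so $\widehat e_n^{\SK}=-e_n^{\SK}$; this converts the combined inequality into a bound on $|\widehat e_n^{\D}(\gamma)/\sqrt{2\gamma}+e_n^{\SK}|$, and the same choice $|\beta|=\gamma^{1/6}$ yields $A\gamma^{-1/6}$. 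The analogous symmetry of $H^{\D}$ is \emph{not} invoked (and indeed need not hold), since the SK-side identity is enough. The only real point of care is that the constants $A_1,A_2$ in Proposition~\ref{lemma:LargeDegree} are uniform in $n$ and $\beta$, which is exactly what is asserted there, so the optimization goes through with no hidden dependence and the constant $A(\gamma_0)$ is readily read off from $A_1,A_2,\log 2$.
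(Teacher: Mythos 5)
Your proof is correct and takes essentially the same route as the paper: combine Proposition \ref{lemma:LargeDegree} with a free-energy/ground-state comparison via the triangle inequality, then optimize over $\beta$ by taking $|\beta|=\gamma^{1/6}$. The only cosmetic difference is that you obtain the $\log 2/|\beta|$ slack via the elementary two-sided bound $e^{-\beta \min H} \le Z_n(\beta) \le |\Omega_n|\,e^{-\beta \min H}$, while the paper derives the same inequality by integrating the identity $\partial_\beta(\phi_n(\beta)/\beta) = -\tfrac{1}{n\beta^2}\E[S(\mu_{\beta,n})]\in[-\log 2/\beta^2,0]$ from $\beta$ to $\infty$; both yield the same bound and the same final choice of scale. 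Your observation that only the SK-side symmetry $\widehat e_n^{\SK}=-e_n^{\SK}$ is needed (not its dilute analogue) is exactly what the paper uses as well.
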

In view of Lemma \ref{bound}, we get both \eqref{eq:enD-limit} and 
\eqref{eq:enhatD-limit} once we control the difference between
the ground state energies of the unconstrained and constrained to have zero magnetization SK models. This is essentially established by our following
lemma (whose proof is provided in Subsection \ref{section_approximation}). 
\begin{lemma}
\label{approximation}
For any $\delta>0$, w.h.p. 
$0 \le U^{\SK}_n-\hU_n^{\SK} \le n^{\frac{1}{2}+\delta}$, where
\begin{equation}\label{eq:Undef}
\hU_n^{\SK} = \min_{\us \in \{-1,+1\}^n} \{ H^{\SK}(\us) \}
\,,\qquad 
U_n^{\SK}= \min_{\us \in \Omega_n} \{ H^{\SK}(\us) \}\,.
\end{equation} 
\end{lemma}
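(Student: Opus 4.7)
The lower bound $0 \le U_n^{\SK} - \hU_n^{\SK}$ is immediate since $\Omega_n \subset \{-1,+1\}^n$; the content lies in the upper bound, which I would establish by perturbing the unconstrained minimizer.

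Let $\widehat\us$ be a minimizer of $H^{\SK}$ over $\{-1,+1\}^n$ and set $m = \sum_i \widehat\sigma_i$; by the $\us \mapsto -\us$ symmetry of $H^{\SK}$ I may assume $m \ge 0$. Writing $S_+ = \{i : \widehat\sigma_i = +1\}$ and $g_i := \sum_{j \ne i}(J_{ij}+J_{ji})\widehat\sigma_j$, local optimality of $\widehat\us$ forces $\widehat\sigma_i g_i \ge 0$ for every $i$, so $g_i \ge 0$ on $S_+$. For any $T \subset S_+$ of size $m/2$, the configuration $\us^T$ obtained from $\widehat\us$ by flipping the coordinates in $T$ lies in $\Omega_n$, and a direct computation yields
\begin{equation*}
H^{\SK}(\us^T) - H^{\SK}(\widehat\us)
\,=\, \frac{2}{\sqrt{2n}} \sum_{i \in T} g_i \;-\; \frac{2}{\sqrt{2n}} \sum_{\substack{i,j \in T \\ i \ne j}}(J_{ij}+J_{ji}).
\end{equation*}
Summing the local-optimality inequalities gives $\sum_i \widehat\sigma_i g_i = -2\sqrt{2n}\,\hU_n^{\SK} + O(\sqrt n)=O(n^{3/2})$, whence $\sum_{i \in S_+} g_i = O(n^{3/2})$. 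Choosing $T$ to consist of the $m/2$ indices of $S_+$ with the smallest $g_i$ values bounds the first-order term by $(m/|S_+|)\cdot O(n^{3/2})\cdot(2/\sqrt{2n}) = O(m)$. The interaction sum is Gaussian with standard deviation $O(m)$ for each fixed $T$; a union bound over $T \subset [n]$ of size $m/2$ combined with standard Gaussian tails bounds it uniformly by $O(m^{3/2}\sqrt{\log n / n})$. Both contributions are $o(n^{1/2+\delta})$ provided the a priori bound $m = o(n^{1/2+\delta/2})$ holds with high probability.

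The key ingredient is therefore to establish $m = o(n^{1/2+\delta/2})$ with high probability. For each integer $k$, let $M_k := \min\{H^{\SK}(\us) : \sum_i \sigma_i = k\}$. The Parisi formula applied to the SK model with fixed magnetization $\sum_i \sigma_i/n = q$ gives $n^{-1}\E M_k \to -\Par_*(q)$, where $\Par_*(\cdot)$ is strictly concave at $q = 0$ with $\Par_*(q) \le \Par_*(0) - c q^2$ for some $c > 0$. Combined with Borell-TIS concentration of $M_k$ at scale $\sqrt n$ around its mean, and a comparison that exploits the shared randomness between $M_k$ and $M_0$ to control the fluctuations of their difference at a scale $\ll \sqrt n$, this rules out $|m| \gg n^{1/2+\delta/2}$.

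The main obstacle is this last magnetization estimate. Treating $M_k$ and $M_0$ as independent Gaussian extrema (each with $\sqrt n$-scale fluctuations) only rules out $|m| \gg n^{3/4}$, which is insufficient. Bridging the gap down to $n^{1/2+\delta/2}$ requires showing $\mathrm{Var}(M_k - M_0) = o(n)$ for small $|k|$, which I would approach via envelope-theorem arguments exploiting the smooth $J$-dependence of the constrained ground state, or by interpolating in an added linear external field $h \sum_i \sigma_i$ and controlling the resulting free energy to second order in $h$ near $h = 0$.
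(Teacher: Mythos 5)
Your perturbation strategy is essentially the one the paper uses: flip $m/2$ coordinates in $S_+$ to land in $\Omega_n$, decompose the energy change into a ``local field'' term plus a within-$T$ interaction term, and bound the latter uniformly over small $T$ by a union bound. Your choice of $T$ (the $m/2$ smallest $g_i$) and the paper's choice (any $m/2$ indices with $|f_i^\star|\le 6$, which exist because $\sum_i|f_i^\star|=O(n)$) are two equivalent ways to exploit that most local fields are $O(1)$; the union bound over $T$ to control $\Delta(W^\star)$ also matches the paper. Up to here the argument is sound.

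The gap, which you yourself flag as ``the main obstacle,'' is the a~priori bound $m = o(n^{1/2+\delta/2})$ w.h.p.\ on the imbalance of the unconstrained minimizer. You propose to obtain it from the Parisi formula at fixed magnetization together with a variance estimate $\mathrm{Var}(M_k-M_0)=o(n)$ proved by envelope-theorem or external-field arguments. That route is not carried out and would in fact be a substantial project in its own right: a quantitative strict-concavity statement for the ground state at fixed magnetization, and sub-$\sqrt n$ control on the fluctuations of the difference $M_k-M_0$, are not available off the shelf and would likely be harder than Lemma~\ref{approximation} itself. The paper closes the gap with a one-line observation you are missing: the SK Hamiltonian has a \emph{gauge symmetry}. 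For any fixed $\ve\in\{-1,+1\}^n$ the map $J_{ij}\mapsto \ve_i\ve_j J_{ij}$ preserves the law of the Gaussian couplings and satisfies $H^{\SK}_{J'}(\us)=H^{\SK}_J(\ve\cdot\us)$, so the minimizer $\us^\star$ is mapped to $\ve\cdot\us^\star$. Hence $\us^\star$ is uniformly distributed on $\{-1,+1\}^n$, and $m=2S_n^\star$ is a centered $\dBin(n,1/2)$ variable, giving $|m|=O(\sqrt{n\log n})$ w.h.p.\ immediately. This is exactly the estimate your perturbation argument needs, and it renders the entire Parisi-formula detour unnecessary. Replace your concluding paragraph with this symmetry observation and the proof is complete.
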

Indeed, applying Borel's concentration inequality for 
the maxima of Gaussian processes (see \cite[Theorem 5.8]{BLM}), we have that 
for some $c>0$, all $n$ and $\delta>0$,
\begin{align}
\P\left[ \left| \hU_n^{\SK} - \E[ \hU_n^{\SK} ]  \right| > n \delta \right]
&\leq 2 e^{-cn \delta^2} \label{conc1} \, ,\\
\P\left[\left| U_n^{\SK} - \E[ U_n^{\SK}]\right| > n \delta \right] & 
\leq 2 e^{-cn\delta^2}\,. \label{conc2}
\end{align}
Recall that $e_n^{\SK} = n^{-1} \E[ U_n^{\SK}]$, whereas 
$n^{-1} \E[ \hU_n^{\SK} ] \to - \Par_*$ by \eqref{SK_groundstate}.
Consequently, the bounds of \eqref{conc1}, \eqref{conc2} coupled 
with Lemma \ref{approximation} imply that $e_n^{\SK} \to -\Par_*$ 
as $n\to \infty$. This, combined with Lemma \ref{bound} and 
\eqref{expectation_rep}, completes the proof of Theorem \ref{ER}.

%
%
%
\subsection{The interpolation argument} 
\label{section_bound}
We first deduce Lemma \ref{bound} out of Proposition \ref{lemma:LargeDegree}.
To this end, we use the inequalities of Lemma \ref{largebeta}
relating the free energy of a spin model to its ground state energy
(these are special cases of general bounds for models with at most 
$c^n$ configurations, but for the sake of completeness we include their proof).   
\begin{lemma}\label{largebeta}
The following inequalities hold for any $n$, $\beta,\gamma > 0$:
\begin{eqnarray}
\label{eq:beta-inf}
\Big|
e_n^{\D}(\gamma) +
\frac{1}{\beta}\,\phi^{\D}_n(\beta,\gamma) 
\Big|\le  
\frac{\log 2}{\beta}\, ,\;\;\;\;\;\;\;\;
\Big|
e_n^{\SK} + 
\frac{1}{\beta}\,\phi^{\SK}_n(\beta) 
\Big|\le  
\frac{\log 2}{\beta}\, .
\end{eqnarray}
Further, for any $n$, $\beta<0$, $\gamma>0$,
\begin{eqnarray}
\Big|
\widehat{e}_n^{\D}(\gamma)
+
\frac{1}{\beta}\, \phi^{\D}_n(\beta,\gamma) 
\Big|\le  
\frac{\log 2}{|\beta|}\, ,\;\;\;\;\;\;\;\;
\Big|
e_n^{\SK} - 
\frac{1}{\beta}\,\phi^{\SK}_n(\beta) \Big|\le  
\frac{\log 2}{|\beta|}\, .
\label{eq:beta-minus-inf}
\end{eqnarray}
\end{lemma}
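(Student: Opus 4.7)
The plan is to derive both bounds from a single elementary sandwich estimate on the partition function, together with the trivial observation that $|\Omega_n|\leq 2^n$. For any spin model with Hamiltonian $H$ restricted to $\Omega_n$ and any real $\beta$, one has
\begin{equation*}
\max_{\us\in\Omega_n} e^{-\beta H(\us)}\;\leq\; Z_n(\beta)\;=\;\sum_{\us\in\Omega_n}e^{-\beta H(\us)}\;\leq\;|\Omega_n|\,\max_{\us\in\Omega_n} e^{-\beta H(\us)}\,.
\end{equation*}
Taking logarithms and expectation, dividing by $n$, and using $\log|\Omega_n|\leq n\log 2$, this yields
\begin{equation*}
\tfrac{1}{n}\,\E\bigl[\max_{\us\in\Omega_n}\{-\beta H(\us)\}\bigr]\;\leq\;\phi_n(\beta)\;\leq\;\log 2 \;+\;\tfrac{1}{n}\,\E\bigl[\max_{\us\in\Omega_n}\{-\beta H(\us)\}\bigr]\,.
\end{equation*}

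For $\beta>0$ the rightmost expectation equals $-\beta\, e_n$, so dividing through by $\beta$ gives $-e_n\leq \phi_n(\beta)/\beta\leq (\log 2)/\beta-e_n$, which is exactly \eqref{eq:beta-inf}; the bound applies verbatim to both the dilute Ising model and the SK model since it uses only $|\Omega_n|\leq 2^n$ and the definition \eqref{eq:def-e}. For $\beta<0$, the same expectation equals $|\beta|\,\widehat{e}_n=-\beta\widehat{e}_n$, so dividing by $\beta$ (reversing the inequalities) and rearranging gives the dilute-model half of \eqref{eq:beta-minus-inf}, with the bound $(\log 2)/|\beta|$.

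For the SK half of \eqref{eq:beta-minus-inf} the same chain of inequalities initially produces $\bigl|\widehat{e}_n^{\SK}+\beta^{-1}\phi_n^{\SK}(\beta)\bigr|\leq (\log 2)/|\beta|$. To convert this into the stated bound in terms of $e_n^{\SK}$, I invoke the sign-symmetry of the SK Hamiltonian: since the $J_{ij}$ are symmetric around zero, $H^{\SK}(\us)\stackrel{d}{=}-H^{\SK}(\us)$ jointly in $\us$, so $\max_{\us\in\Omega_n}H^{\SK}(\us)\stackrel{d}{=}-\min_{\us\in\Omega_n}H^{\SK}(\us)$ and consequently $\widehat{e}_n^{\SK}=-e_n^{\SK}$. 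Substituting this identity gives precisely $\bigl|e_n^{\SK}-\beta^{-1}\phi_n^{\SK}(\beta)\bigr|\leq(\log 2)/|\beta|$, completing the proof. There is no real obstacle here: everything is a one-line consequence of the sandwich bound, and the only non-cosmetic input is the distributional symmetry of the Gaussian disorder used in the last step.
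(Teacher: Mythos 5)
Your proof is correct, and it takes a cleanly different route from the paper's. The paper proceeds by calculus: it computes $\frac{\partial}{\partial\beta}\bigl(\phi_n(\beta)/\beta\bigr) = -\frac{1}{n\beta^2}\E[S(\mu_{\beta,n})]$ for the Boltzmann entropy $S$, bounds the entropy by $\log|\Omega_n| \le n\log 2$, and then integrates from $\beta$ to $+\infty$ (or $-\infty$), using the zero-temperature limits $\phi_n(\beta)/\beta \to -e_n$ and $\phi_n(\beta)/\beta \to -\widehat{e}_n$. You instead bound the partition function directly by sandwiching it between one term and $|\Omega_n|$ terms, which after taking logarithms, expectations, and dividing by $\beta$ immediately yields the same two-sided estimate. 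Both arguments ultimately draw their strength from the single fact $|\Omega_n|\le 2^n$, and both invoke $\widehat{e}_n^{\SK}=-e_n^{\SK}$ via the sign symmetry of the Gaussian disorder to rewrite the $\beta<0$ SK bound; but yours is algebraic and avoids differentiating the free energy or introducing the entropy functional, which makes it shorter and perhaps more transparent. What the paper's derivative route buys is a sharper structural statement en passant: $\phi_n(\beta)/\beta$ is monotone nonincreasing in $\beta$ with explicitly bounded slope, which is a strictly stronger fact than the two-point bound you derive. Neither route has a gap; both are complete.
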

\begin{proof}
Let $H_n(\us)$ be a generic Hamiltonian for $\us \in \Omega_n$. 
One then easily verifies that 
\begin{align*}
\frac{\partial}{\partial \beta} \left(\frac{\phi_n(\beta)}{\beta}\right) &= -\frac{1}{n\beta^2} \E[S(\mu_{\beta,n})]  \in \big[ -\frac{\log 2}{\beta^2},0 \big] \,, 
\end{align*}
for the Boltzman measure \eqref{eq:Bolt-meas} and the non-negative
entropy functional $S(\mu)= - \sum_{\us \in \Omega_n} \mu(\us) \log \mu(\us)$ 
which is at most $\log |\Omega_n|$. 
Further, comparing \eqref{eq:def-free-eng} and \eqref{eq:def-e} we see that 
$\beta^{-1} \phi_n(\beta) \to - e_n$ when $\beta \to \infty$ (while 
$n$ is fixed). Consequently, for any $\be>0$, 
\begin{align*}
\Big| e_n + \frac{\phi_n(\beta)}{\beta} \Big| &=
                                                   \Big|\int_{\beta}^{\infty}
                                                   \frac{\partial}{\partial
                                                   u}
                                                   \left(\frac{\phi_n(u)}{u}\right)
                                                   \de u \Big| \leq \frac{\log 2}{\beta} \,.
\end{align*}
We apply this inequality separately to the SK model 
and the diluted Ising model to get the bounds of \eqref{eq:beta-inf}.
We similarly deduce the bounds of \eqref{eq:beta-minus-inf} upon
observing that 
$\beta^{-1} \phi_n(\beta) \to -\widehat{e}_n$ when $\beta \to -\infty$
and recalling that with $\{H^{\SK}_n(\us)\}$ a zero mean 
Gaussian process, necessarily $\widehat{e}_n^{\SK}= - e_n^{\SK}$. 
\end{proof}

\begin{proof}[Proof of Lemma \ref{bound}]
Clearly, for any $n$, $\beta>0$ and $\gamma>0$, 
\begin{align*}
\Big|\frac{e_n^{\D}(\gamma)}{\sqrt{2\gamma}} - e_n^{\SK} \Big|&
\leq  
\left| 
\frac{1}{\sqrt{2\gamma}} e_n^{\D}(\gamma) 
+
\frac{1}{\beta} \phi_n^{\D}(\frac{\beta}{\sqrt{2\gamma}},\gamma) 
\right|
+ \left| \frac{1}{\beta} \phi_n^{\SK}(\beta) - \frac{1}{\beta} \phi_n^{\D}(\frac{\beta}{\sqrt{2\gamma}}, \gamma)\right| 
+ 
\left| e_n^{\SK} + \frac{1}{\beta} \phi_n^{\SK}(\beta) \right| \,. 
\end{align*}
In view of \eqref{eq:beta-inf}, the first and last terms on the \abbr{rhs}
are bounded by $(\log 2)/\beta$. Setting $\beta= \gamma^{1/6}$, 
we deduce from Proposition \ref{lemma:LargeDegree} that 
the middle term on the \abbr{rhs} is bounded by   
$A_1 \gamma^{-1/6} + A_2 \gamma^{-1/2}$, yielding 
the first (left) bound in \eqref{eq:en-ehn-bds}  
(for $A=\log 2 + A_1 + A_2 \gamma_0^{-1/3}$). In case $\beta<0$, 
starting from 
\begin{align*}
\Big|\frac{\widehat{e}_n^{\D}(\gamma)}{\sqrt{2\gamma}} + e_n^{\SK} \Big|&
\leq  
\left| 
\frac{1}{\sqrt{2\gamma}} \widehat{e}_n^{\D}(\gamma) 
+
\frac{1}{\beta} \phi_n^{\D}(\frac{\beta}{\sqrt{2\gamma}},\gamma) 
\right|
+ \left| \frac{1}{\beta} \phi_n^{\SK}(\beta) - \frac{1}{\beta} \phi_n^{\D}(\frac{\beta}{\sqrt{2\gamma}}, \gamma)\right| 
+ 
\left| e_n^{\SK} - \frac{1}{\beta} \phi_n^{\SK}(\beta) \right| \,, 
\end{align*}
and using \eqref{eq:beta-minus-inf},
yields the other (right) bound in \eqref{eq:en-ehn-bds}.
\end{proof}
\begin{proof}[Proof of Proposition \ref{lemma:LargeDegree}]
For $t \in [0,1]$ we consider the interpolating Hamiltonian on $\Omega_n$
\begin{align}\label{eq:int-Ham}
H_n(\gamma, t,\us) := \frac{1}{\sqrt{2\gamma}} 
H_{\gamma(1-t)}^{\D}(\us)  + \sqrt{t} H^{\SK}(\us) \,,
\end{align}
denoting by $Z_n(\beta,\gamma,t)$, $\phi_n(\beta,\gamma,t)$ 
and $\mu_{\beta,n}(\cdot;\gamma,t)$, the 
partition function, free energy density,
and Boltzmann measure, respectively, 
for this interpolating Hamiltonian. Clearly,  
$\phi_n(\beta,\gamma, 0) = \phi_n^{\D} (\frac{\beta}{\sqrt{2\gamma}}, \gamma)$ 
and $ \phi_n(\beta,\gamma,1) = \phi_n^{\SK}(\beta)$. Hence,
\begin{align*}
\left|\phi_n^{\D}(\frac{\beta}{\sqrt{2\gamma}},\gamma) -
  \phi_n^{\SK}(\beta) \right| \leq \int_{0}^{1} \left|\frac{\partial
  \phi_n}{\partial t} (\beta,\gamma,t) \right|\de t 
\end{align*}
and it suffices to show that 
$|\frac{\partial \phi_n}{\partial t}|$ is 
bounded, uniformly over $t \in [0,1]$ and $n$, by 
the \abbr{rhs} of  \eqref{eq:BasicInterpolation}. 
To this end, associate with i.i.d. configurations 
$\{\us^j, j \ge 1\}$ from $\mu_{\beta,n}(\cdot;\gamma,t)$
and $\ell \ge 1$, the multi-replica overlaps
\begin{eqnarray*}
Q_\ell\equiv \frac{1}{n}\sum_{i=1}^n \Big( \prod_{j=1}^{\ell} \sigma_i^j \Big) \,.
\end{eqnarray*}
Then, denoting by $\langle\, \cdot \, \rangle_t$ the expectation 
over such i.i.d. configurations $\{\us^j, j \ge 1\}$, 
it is a simple exercise in spin glass theory (see for example 
\cite{FranzLeone}), to explicitly express the relevant derivatives as
\begin{align}
\frac{\partial \phi_n}{\partial t}(\beta,\gamma, t) &= \left(\frac{\partial \phi_n}{\partial t} \right)_{\SK} + \left(\frac{\partial \phi_n}{\partial t}\right)_{\D} 
\,, \nonumber \\
\left(\frac{\partial \phi_n}{\partial t} \right)_{\SK} &= \frac{\beta^2}{4} (1- \E[\langle Q_2^2 \rangle_t] ) \,,
\label{eq:der-SK}
\\
\left(\frac{\partial \phi_n}{\partial t}\right)_{\D} &= -\gamma \log\cosh \left(\frac{\beta}{\sqrt{2\gamma}}\right)+ \gamma \sum_{\ell=1}^{\infty} \frac{(-1)^\ell}{\ell} \left(  \tanh \frac{\beta}{\sqrt{2\gamma}} \right)^\ell 
\E[\langle Q_\ell^2 \rangle_t ] \,.
\label{eq:der-D} 
\end{align}
For the reader's convenience, we detail the derivation of 
\eqref{eq:der-SK} and \eqref{eq:der-D} in 
Subsection \ref{interpolation_derivation}, and note in passing 
that the expressions on their \abbr{RHS} resemble the derivatives 
of the interpolating free energies obtained in 
the Gaussian and dilute spin glass models, respectively (see \cite{GuerraToninelliSK}, \cite{GuerraToninelliDiluted}). 

Now observe that $|Q_\ell| \le 1$ for all $\ell \ge 2$ and 
$Q_1=0$ on $\Omega_n$, hence
\begin{align*}
\left|\frac{\partial \phi_n}{\partial t} (\beta,\gamma,t) \right| &\leq \gamma \left|\log\cosh \left( \frac{\beta}{\sqrt{2\gamma}}\right) - \frac{\beta^2}{4\gamma} \right| + \frac{\gamma}{2} \left| \left(\tanh \frac{\beta}{\sqrt{2\gamma}} \right) ^2 - \frac{\beta^2}{2\gamma}  \right| + \gamma \sum_{\ell=3}^{\infty} \frac{1}{\ell} \left| \tanh \frac{\beta}{\sqrt{2\gamma}}  \right|^\ell.
\end{align*}
The required uniform bound on $|\frac{\partial \phi_n}{\partial t}|$ 
is thus a direct consequence of the elementary inequalities
$$
|\log\cosh x-\frac{1}{2}x^2|\le C_1 x^4 \,,\quad
|y^2-x^2|\le C_2x^4 \,,\quad
|-\log(1-y)- y-\frac{1}{2} y^2|\le C_3 |x|^3 \,,
$$ 
which hold for some finite $C_1$, $C_2$, $C_3$ and any $y=|\tanh x|$.
\end{proof}

\subsection{Proof of Lemma \ref{approximation}}\label{section_approximation}
Recall that $H^{\SK}(\us) = - \frac{1}{2\sqrt{n}} \us^T \tbJ\us$ where $\tbJ=\{\tilde{J}_{ij}= (J_{ij}+ J_{ji})/\sqrt{2}: 1\leq i,j \leq n\}$ is a 
\abbr{GOE} matrix. Since $\{\tJ_{ii}\}$ do not affect 
$U^{\SK}_n - \hU^{\SK}_n$, we further set all diagonal entries 
of $\tbJ$ to zero.    
By symmetry of the Hamiltonian $H^{\SK}(\,\cdot\,)$, the configuration
$\us^\star$ that achieves the 
unconstrained ground state energy $H^{\SK}(\us^\star)=\hU^{\SK}_n$ is
uniformly random in $\{-1,+1\}^n$. Therefore, 
$S^\star_n := \frac{1}{2} \sum_{i=1}^n\sigma_i^\star$ 
is a centered $\dBin(n,1/2)$ random variable, and by the 
\abbr{LIL} the events $B_n=\{|S^\star_n|\le b_n \}$ 
hold w.h.p. for $b_n:=\sqrt{n\log n}$. By definition 
$\hU^{\SK}_n \ge -\frac{n}{2} \lambda_{\max}(\tbJ/\sqrt{n})$, 
hence the events $C_n = \{\hU^{\SK}_n\ge -2n \}$ also hold w.h.p.
by the a.s. convergence of the largest eigenvalue 
$\lambda_{\max}(\,\cdot\,)$ for Wigner matrices (see 
\cite[Theorem 2.1.22]{AGZ}). 
Consequently, hereafter our analysis is carried out on the 
event $\{B_n \cap C_n\}$ and without loss of generality we can 
and shall further assume that $S^\star_n>0$ is integer (since $n$ is even).

Since $\us^\star$ is a global minimizer of the quadratic form $H^{\SK}(\us)$
over the hyper-cube $\{-1,1\}^n$, necessarily $\sigma_i^\star = \sign(f^\star_i)$ 
for 
$$
f^\star_i := \frac{1}{2 \sqrt{n}} 
\sum_{j=1}^n \tJ_{ij} \sigma_j^\star \,.
$$ 
Consequently, under the event $C_n$,
$$
-2n \le \hU^{\SK}_n = H^{\SK}(\us^\star) = - \sum_{i=1}^n \sigma_i^\star f_i^\star  
= -\sum_{i=1}^n |f_i^\star| \,, 
$$
hence $R^\star := \{i \in [n] : |f_i^\star| \le 6\}$ is of size 
at least $(2/3) n$. Thus, for $n \ge 6 b_n$, under the event 
$B_n \cap C_n$ we can find a collection  
$W^\star \subseteq \{i \in R^\star:\, \sigma_i^\star=+1\}$ 
of size $S^\star_n$ 
and let $\ush \in \Omega_n$ be the configuration obtained   
by setting  $\sh_i = -\sigma_i^\star = -1$ whenever $i\in W^\star$ while 
otherwise $\sh_i=\sigma_i^\star$.
We obviously have then that
\begin{eqnarray}\label{eq:basic-bd-Un}
\hU^{\SK}_n = H^{\SK}(\us^\star)\le U_n^{\SK} \le H^{\SK}(\ush) \,.
\end{eqnarray}
Further, by our choices of $\ush$ and $W^\star \subseteq R^\star$, also
\begin{align}
H^{\SK}(\ush)-H^{\SK}(\us^\star) &= 
\frac{2}{\sqrt{n}}\sum_{i\in W^\star} \sum_{j\in [n]\setminus W^\star}\tJ_{ij} \sigma_j^\star
\nonumber 
\\& \le
4 \sum_{i\in W^\star} |f_i^\star|
+ \frac{4}{\sqrt{n}} \Delta (W^\star)
\le 24 S_n^\star + \frac{4}{\sqrt{n}} \Delta (W^\star)
\,, \label{eq:TwoPartsBound}
\end{align}
where we define, for $W \subseteq [n]$ the corresponding partial sum 
\begin{align*}
\Delta (W) := \sum_{ i, j \in W, i < j} |\tJ_{ij}| \,,
\end{align*}
of $\binom{|W|}{2}$ i.i.d. variables $\tJ_{ij}$. Under the event $B_n$ we have 
that $S_n^\star \le b_n \le y_n :=  \frac{1}{32} n^{1/2 + \delta}$, so 
by \eqref{eq:basic-bd-Un} and \eqref{eq:TwoPartsBound} it suffices 
to show that w.h.p. $\{\Delta(W^\star) \le x_n \}$ for $x_n = \sqrt{n} y_n$.
To this end, note that by Markov's inequality,
for some $c>0$, all $n$ and any fixed $W$ of size $|W| \le b_n$, 
$$
\P( \Delta (W) \ge x_n) \le e^{-x_n} \E[e^{|\tJ|}]^{b_n^2} \le e^{-c x_n} \,.
$$
With at most $2^n$ such $W \subseteq [n]$, we conclude that
$$
\P( \sup\{ \Delta(W) : W \subset [n], |W|\leq b_n \} \le x_n ) \to 1 \,,
$$
and in particular w.h.p. $\{\Delta(W^\star) \le x_n\}$ (under 
$B_n =\{ S_n^\star \le b_n \}$). 

\subsection{The interpolation derivatives} 
\label{interpolation_derivation}
Recall the 
Hamiltonian $H_n(\gamma,t,\us)$
of \eqref{eq:int-Ham}, 
the corresponding 
partition function $Z_n(\beta,\gamma,t)$
and free energy density $\phi_n(\beta,\gamma,t)$.
We view $n^{-1} \log Z_n(\beta, \gamma, t) :=\psi_n (t,\mathbf{z},\mathbf{J})$, 
as a (complicated) function of the Gaussian 
couplings $\J= \{ J_{ij} : 1\leq i,j \leq n\}$ and the Poisson 
multiplicities $\z= \{z_{ij}: 1\leq i,j \leq n\}$. Denoting by
$p(t,\cdot)$ the $\dPois(\gamma(1-t)/n)$ 
probability mass function (\abbr{pmf}) of $z_{ij}$
yields the joint \abbr{pmf} 
$\mathbf{p}(t,\mathbf{z})= \prod_{1\leq i,j \leq n} p(t, z_{ij})$,
and the expression
\begin{align}
\phi_n(\beta,\gamma,t) &=  \E [\psi_n(t,\mathbf{z},\mathbf{J})] =  
\int \psi_n(t,\mathbf{z},\mathbf{J}) \mathbf{p}(t,\mathbf{z}) \de\mu(\mathbf{z},\mathbf{J}) 
\end{align}
where $\mu= (\nu_{\N})^{n^2} \otimes (\nu_{\mathbb{R}})^{n^2}$
for the counting measure $\nu_{\N}$ on $\N$ and 
the standard Gaussian measure $\nu_{\mathbb{R}}$ on $\mathbb{R}$. Thus,
\begin{align}
\frac{\partial \phi_n}{\partial t}(\beta,\gamma, t) &= 
\int \frac{\partial \psi_n}{\partial t}(t,\mathbf{z},\mathbf{J}) \mathbf{p}(t,\mathbf{z}) \de\mu(\mathbf{z},\mathbf{J}) + \int \psi_n(t,\mathbf{z},\mathbf{J}) 
\frac{\partial \mathbf{p}}{\partial t} (t,\mathbf{z}) \de\mu(\mathbf{z},\mathbf{J})
\nonumber \\
&:= 
\left(\frac{\partial \phi_n}{\partial t} \right)_{\SK}
+ \left(\frac{\partial \phi_n}{\partial t} \right)_{D} \,.
\end{align}  
Proceeding to verify the expression \eqref{eq:der-SK}, here
$\frac{\partial H_n}{\partial t} = \frac{1}{2\sqrt{t}} H^{\SK}$
(since $H^{\D}_{\gamma(1-t)}(\cdot)$ depends on $t$ only through the 
\abbr{pmf} of $\mathbf{z}$). Hence,
$$
\frac{\partial}{\partial t} \big[ \log Z_n(\beta,\gamma,t) \big]
= -\beta \Big\langle 
\frac{\partial H_n}{\partial t}(\gamma,t,\us) \Big\rangle_t
= - \frac{\beta}{2 \sqrt{t}} \Big\langle H^{\SK}(\us) \Big\rangle_t \,,
$$
resulting with 
\begin{align*}
\left(\frac{\partial \phi_n}{\partial t} \right)_{\SK}
= -\frac{1}{n} \frac{\beta}{2\sqrt{t}} \E_{\mathbf{z}} \Big(
 \E_{\mathbf{J}} [ \langle H^{\SK}(\sigma) \rangle_t ] \Big) \,.
\end{align*}
The expression on the \abbr{rhs} of \eqref{eq:der-SK} then follows 
by an application of Gaussian integration by parts 
to $\E_{\mathbf{J}} [\langle H^{\SK}(\us) \rangle_t]$,
as illustrated for example in \cite[Lemma 1.1]{Pan}. 

Next, to establish \eqref{eq:der-D} let 
$h_{ij}(z_{ij}) := \E [\psi_n(t,\mathbf{z},\mathbf{J})|z_{ij}]$, 
and note that the product form of $\mathbf{p}(t,\mathbf{z})$ and 
$\mu(\mathbf{z},\mathbf{J})$, results with
\begin{align}\label{eq:der-D-temp}
\left(\frac{\partial \phi_n}{\partial t} \right)_{D} = 
\sum_{i=1}^{n} \sum_{j=1}^{n} \int h_{ij}(z) 
 \frac{\partial p}{\partial t}(t,z) \de\nu_{\N} (z) \,.
\end{align}
The $ij$-th integral on the \abbr{rhs} of \eqref{eq:der-D-temp}
is merely the value of $(-\gamma/n) g'(\lambda)$,
where $g(\lambda)=\E[f(z)]$ for $f=h_{ij}$ and $z \sim \dPois(\lambda)$
at $\lambda=\gamma(1-t)/n$. Differentiating the $\dPois(\lambda)$ 
\abbr{pmf} one has the identity 
$g'(\lambda) = \E[f(z+1)- f(z)]$ 
(under mild regularity conditions on $f$). This crucial observation
transforms \eqref{eq:der-D-temp} into
\begin{align}\label{eq:der-D-temp2}
\left(\frac{\partial \phi_n}{\partial t} \right)_{D} = -\frac{\gamma}{n} 
\sum_{i=1}^{n} \sum_{j=1}^{n} \E [h_{ij}(z_{ij}+1)- h_{ij}(z_{ij})] \,.
\end{align}
Here $\psi_n(t,\cdot,\cdot)=n^{-1} \log Z_n(\beta,\gamma,t)$ 
and adding one to $z_{ij}$ corresponds to an extra copy of the 
edge $(i,j)$ in the dilute Ising model of Hamiltonian 
$\frac{1}{\sqrt{2\gamma}} H_{\gamma(1-t)}^{\D}(\us)$. 
Consequently, setting $b:=\frac{\beta}{\sqrt{2\gamma}}$,
\begin{equation}\label{eq:h-diff}
h_{ij}(z_{ij}+1) - h_{ij}(z_{ij}) = \frac{1}{n} 
\log \Big\langle e^{b \sigma_i \sigma_j} \Big\rangle_t 
= \frac{1}{n} \log \Big\{ \cosh(b) 
\big[1 + \tanh(b) \langle \sigma_i \sigma_j \rangle_t \big] \Big\} \,, 
\end{equation}
since $e^{b y} = \cosh(b) [ 1+\tanh (b) y ]$
for the $\{-1,+1\}$-valued $y= \sigma_i\sigma_j$.
Combining \eqref{eq:der-D-temp2} and \eqref{eq:h-diff}, we
obtain by the Taylor series for $-\log(1+x)$ (when $-1<x<1$), that
\begin{align*}
\left(\frac{\partial \phi_n}{\partial t} \right)_{D} &=
-\frac{\gamma}{n^2} \sum_{i=1}^{n} \sum_{j=1}^{n}  \E \left[ 
\log \left\{ \cosh(b) \left[ 1+ \tanh(b)\langle \sigma_i \sigma_j \rangle_t \right] \right\}\right]\nonumber \\
&= -\gamma \log \cosh (b) + \gamma \sum_{\ell=1}^{\infty} \frac{(-1)^\ell}{\ell} \Big( \tanh (b) \Big)^\ell \E\Big[ \frac{1}{n^2} \sum_{i,j=1}^n ( \langle \sigma_i \sigma_j \rangle_t)^\ell \Big] \nonumber \\
&= -\gamma \log\cosh (b)+ \gamma \sum_{\ell=1}^{\infty} \frac{(-1)^\ell}{\ell} \Big(  \tanh (b) \Big)^\ell \E[\langle Q_{\ell}^2 \rangle_t ] \,,
\end{align*}
as stated in \eqref{eq:der-D}.

\section{Graph Comparison: Proof of Theorem \ref{d-reg}}
\label{comparison}

The notion of uniform random $\gamma$-regular graph refers to 
drawing such graph uniformly from among all $\gamma$-regular 
simple graphs on $n$-vertices, provided, as we assume 
throughout, that $n\gamma$ is even. 
We instead denote by ${\Rg \gamma}$ the more tractable 
configuration model, where each vertex is equipped with 
$\gamma$ half-edges and a multigraph (of possible 
self-loops and multiple edges) is formed by a 
uniform random matching of the collection of all 
$\gamma n$ half-edges. Indeed, as mentioned in the 
context of \ER graphs (see start of Section \ref{interpolation}),
for $\gamma$ bounded the matching in ${\Rg \gamma}$ produces 
a simple graph  with probability bounded away from zero, and
conditional on being simple this graph is uniformly random.
Consequently, any property that holds w.h.p. for the
configuration model multigraph ${\Rg \gamma}$ must 
also hold w.h.p. for the simple uniform random 
$\gamma$-regular graph.

Our strategy for proving Theorem \ref{d-reg} is to start from
the random regular multigraph $G_1 \sim {\Rg \gamma}$, deleting 
some edges and ``rewiring" some of the existing ones to obtain 
a new graph $G_2$ which is approximately an \ER random graph 
of $n \gamma_-/2$ edges, where 
$\gamma_-:= \gamma- \sqrt{\gamma}\log \gamma$.
Then, with Theorem \ref{ER} providing us with the typical 
behavior of extreme bisections of $G_2$, the main 
challenge is to control the effect of our edge transformations 
well enough to handle the minimum and maximum bisections 
of $G_1$.

Specifically, drawing i.i.d. $X_i \sim \dPois(\gamma_-)$, we 
let $Z_i := (\gamma-X_i)_+$ and color $Z_i$ of 
the $\gamma$ half-edges of each vertex $i \in [n]$ 
by {\small\sf BLUE} ({\small\sf B}). All other half-edges 
are colored {\small\sf RED} ({\sf R}). Matching the half-edges 
uniformly, without regard to their colors, we obtain a graph 
$G_1 \sim {\Rg \gamma}$. Our coloring decomposes $G_1$ to 
the sub-graph $\Grr$ consisting of all the {\small\sf RR} 
edges and $\Grb \cup \Gbb$ having all other edges, which we 
in turn decompose to the sub-graph $\Gbb$ consisting of the 
{\small\sf BB} edges and $\Grb$ having all the 
multi-color edges (i.e. {\small\sf RB} and {\small BR}).
To transform $G_1$ to $G_2$, we first delete all edges 
of $\Gbb$, disconnect all the multi-colored {\sf RB} edges 
and delete all the {\sf B} half-edges that as a result 
became unmatched. We then form a new sub-graph $\tGrr$ 
by uniformly re-matching all the free {\sf R} half-edges
(in case there is an odd number of such half-edges 
we leave one of them free as a self-loop). The
graph $G_2$ has the vertex set $[n]$ and 
$E(G_2) = E(\Grr) \cup E(\tGrr)$. 

\smallskip
We represent by $\Omega_n$ the collection of all 
bisections for a graph $G$ having $n$ vertices, denoting 
by $\cut_G(\us)$ the cut size for the partition between 
$\{i \in [n] : \sigma_i=-1\}$ and its complement. 
Then, for any $\us \in \Omega_n$ we have
\begin{align}
\cut_{G_1}(\us) &=\cut_{G_2}(\us)- \cut_{\tGrr}(\us) + \cut_{\Grb \cup \Gbb}(\us) 
\,. 
\label{cut_equation}
\end{align}
We control the \abbr{lhs} of \eqref{cut_equation} by three key lemmas,
starting with the following consequence of Theorem \ref{ER}, proved
in Subsection \ref{proof_grr} that gives sharp estimates
on the dominant part, namely $\cut_{G_2}(\us)$. 
\begin{lemma}
\label{Grr}
We have, w.h.p. as $n \to \infty$,
\begin{align}\label{eq:mcut-G2}
\frac{\mcut(G_2)}{n} &= \frac{\gamma_-}{4}- P_* \sqrt{\frac{\gamma}{4}} + o_{\gamma}(\sqrt{\gamma}) \, , \\
 \frac{\Mcut(G_2)}{n} &= \frac{\gamma_-}{4}+ P_* \sqrt{\frac{\gamma}{4}} + o_{\gamma}(\sqrt{\gamma}).\label{eq:Mcut-G2}
\end{align}
\end{lemma}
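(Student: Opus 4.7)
The construction of $G_2$ was designed precisely so that its law matches, up to negligible corrections, that of a Poissonian random multigraph of average degree $\gamma_-$. My plan is to establish this and then invoke Theorem \ref{ER}.

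\smallskip
\textbf{Step 1: $G_2$ as a configuration model on the Red half-edges.} Condition on the coloring, which is determined by the i.i.d.\ $X_i\sim\dPois(\gamma_-)$, so that vertex $i$ carries $d_i := \gamma - Z_i = \min(X_i,\gamma)$ Red half-edges. The matching of all $\gamma n$ half-edges in $G_1$ is uniform, so a simple exchangeability argument shows that the RR pairings of $\Grr$, combined with the prescribed uniform re-matching of the freed Red half-edges that forms $\tGrr$, together yield a \emph{uniformly random matching} of the collection of all Red half-edges. Hence, conditional on $(d_i)$, $G_2$ is distributed as the configuration model with degree sequence $(d_i)$.

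\smallskip
\textbf{Step 2: Removing the truncation.} Standard Poisson tail bounds give $\P(X_1>\gamma)\le \exp(-\tfrac12\log^2\gamma)$ for $\gamma$ large, since $\gamma-\gamma_-=\sqrt\gamma\log\gamma$ corresponds to $\log\gamma$ standard deviations; consequently $\E[(X_1-\gamma)_+]$ is smaller than every inverse power of $\gamma$. By Chebyshev, the excess $\Delta_n:=\sum_{i=1}^n (X_i-\gamma)_+$ satisfies $\Delta_n/n = o_\gamma(1)$ w.h.p.\ as $n\to\infty$. Couple $G_2$ with the configuration model $\widetilde G$ on the \emph{untruncated} sequence $(X_i)$ so that the matchings agree on all non-truncated half-edges; then $G_2$ and $\widetilde G$ differ on at most $\Delta_n$ edges, which perturbs any cut size by at most $n\cdot o_\gamma(1)$, i.e., by $o_\gamma(n\sqrt\gamma)$.

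\smallskip
\textbf{Step 3: Identification with $\Pg{\gamma_-/2}$.} The configuration model on i.i.d.\ $\dPois(\lambda)$ degrees has the same law as the Poissonized multigraph $\Pg{\lambda/2}$, up to a single extra half-edge absorbing the parity of $\sum X_i$ and an $O(1)$-per-vertex self-loop correction: both models are uniform conditional on the degree sequence, and the marginal degree laws coincide after these adjustments, whose aggregate effect on $\widetilde G$ is only $O(n^{1/2})$ edges, another $o_\gamma(n\sqrt\gamma)$ perturbation.

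\smallskip
\textbf{Step 4: Invoking Theorem \ref{ER}.} As recalled at the start of Section \ref{interpolation}, Theorem \ref{ER} holds in particular for $\Pg{\gamma'}$. Taking $\gamma'=\gamma_-/2$ gives, w.h.p.\ as $n\to\infty$,
\[
\frac{\mcut(\Pg{\gamma_-/2})}{n}
= \frac{\gamma_-}{4} - \Par_*\sqrt{\frac{\gamma_-}{4}} + o_\gamma(\sqrt{\gamma_-}),
\]
together with the analogous formula for $\Mcut$. Since $\gamma_-=\gamma-\sqrt\gamma\log\gamma$, Taylor expansion gives $\sqrt{\gamma_-/4} = \sqrt{\gamma/4} + O(\log\gamma) = \sqrt{\gamma/4} + o_\gamma(\sqrt\gamma)$, and the coupling errors from Steps 2--3 are absorbed into the error term, yielding \eqref{eq:mcut-G2} and \eqref{eq:Mcut-G2}.

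\smallskip
\textbf{Expected main difficulty.} The most delicate point is Step 1: the careful verification that $G_2$, defined by the intricate deletion/rewiring procedure, is genuinely a configuration model on the Red half-edges. The identity is intuitively clear by exchangeability but the conditioning has to be executed with care. Steps 2--4 are then essentially routine, since each discrepancy --- truncation, parity, and self-loop corrections --- contributes only $o_\gamma(n)$ to any cut, comfortably within the $o_\gamma(n\sqrt\gamma)$ precision provided by Theorem \ref{ER}.
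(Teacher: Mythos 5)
Your overall architecture is the same as the paper's: conditionally on the coloring, realize $G_2$ as a configuration model on the Red half-edges, control the effect of the degree truncation, identify the resulting model with one covered by Theorem~\ref{ER}, and absorb $\sqrt{\gamma_-/4}-\sqrt{\gamma/4}=O_\gamma(\log\gamma)$ into the error. Your Step~1 --- which you correctly flag as the delicate point --- is exactly what the paper establishes via the explicit combinatorial identity of Lemma~\ref{lem-pair}; ``intuitively clear by exchangeability'' is not a proof here, since the two-stage procedure (uniform match, delete, uniformly re-match the freed half-edges) must be shown to produce the uniform matching on Red half-edges, and this requires a calculation.

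The substantive departure, and the place where your argument has a genuine gap, is Step~3. You assert that the configuration model on i.i.d.\ $\dPois(\lambda)$ degrees ``has the same law'' as $\Pg{\lambda/2}$ ``up to a single extra half-edge absorbing the parity of $\sum X_i$ and an $O(1)$-per-vertex self-loop correction,'' on the grounds that the conditional laws given the degree sequence agree and the marginal degree laws ``coincide after these adjustments.'' The first part is fine: conditional on the degree sequence, both are the uniform configuration model, so there is no per-vertex self-loop correction to speak of. But the marginal degree laws do \emph{not} coincide modulo parity. In $\Pg{\lambda/2}$ the total degree is $2N$ with $N\sim\dPois(\lambda n/2)$, which has variance $2\lambda n$, whereas $\sum_i X_i\sim\dPois(\lambda n)$ has variance $\lambda n$; the two totals can only be coupled with a typical discrepancy of order $\sqrt{\lambda n}$, not $O(1)$. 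That discrepancy is still $o(n)$ for each fixed $\gamma$, so the step is repairable by explicitly coupling the totals and adjusting $O(\sqrt{\gamma_- n})$ half-edges, but as written the claim and its justification are wrong.

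The paper sidesteps this entirely. Instead of coupling the configuration model to $\Pg{\cdot}$ edge-by-edge, it thins each half-edge of $G_n^{\rm int}$ independently with probability $1/n$ to obtain the Poisson-cloning model $\Pc{\gamma_-}$, invokes Kim's theorem that $\Pc{\gamma_-}$ and $G_I(n,\gamma_-/n)$ are mutually contiguous (so w.h.p.\ statements transfer for free, with no edge counting), and notes that the thinning itself removes only $O(1)$ half-edges w.h.p. If you prefer the self-contained coupling with $\Pg{\gamma_-/2}$, you need to replace your Step~3 with the total-degree coupling argument; alternatively, cite the Poisson-cloning contiguity result as the paper does.
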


Our next lemma, proved in Subsection \ref{proof_expectedcut}, 
shows that while both the {\small\sf B} half-edge deletions and 
the {\small\sf R} half-edge re-matching that follows, may affect
the cut size, on the average (with respect to our random matching),
at the scale of interest to us they cancel out each other. 
\begin{lemma}
\label{expectedcut}
Uniformly over all $\us \in \Omega_n$, 
\begin{align}
\E[\cut_{\Grb}(\us)]&= n\left(\frac{\sqrt{\gamma}\log \gamma}{2} + O_{\gamma}(1)  \right) + o(n) \, ,\label{e1}\\
\E[\cut_{\tGrr}(\us)] &= n\left(\frac{\sqrt{\gamma}\log \gamma }{4} + O_{\gamma}(1) \right) + o(n)\, , \label{e2}\\
\E[\cut_{\Gbb}(\us)] &= n \left(\frac{(\log \gamma)^2}{4} + o_{\gamma}(1) \right) 
+ o(n).\label{e3}
\end{align}
\end{lemma}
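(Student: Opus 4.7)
My plan is to condition on the coloring $\{Z_i\}_{i\in[n]}$ and exploit the fact that, given the colors, the configuration-model matching producing $G_1$ is a uniformly random pairing of the $n\gamma$ half-edges. The single combinatorial input used repeatedly is: for any two disjoint sets $A,B$ of half-edges in such a uniform matching, the expected number of matched pairs with one end in each is $|A||B|/(n\gamma-1)$. A preliminary step is to show that $\mu := \E[Z_i] = \sqrt{\gamma}\log\gamma + o_\gamma(1)$ by writing $(\gamma-X_i)_+ = (\gamma-X_i)+(X_i-\gamma)_+$ and using a Chernoff/Bennett bound on $X_i \sim \mathrm{Poisson}(\gamma_-)$: since the threshold $\gamma$ sits roughly $\log\gamma$ standard deviations above $\E X_i = \gamma_-$, one has $\P(X_i\ge \gamma) = O_\gamma(e^{-(\log\gamma)^2/2})$, making $\E[(X_i-\gamma)_+]$ super-polynomially small in $\gamma$.

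For \eqref{e1} and \eqref{e3}, the matching identity applied conditionally on $Z$ gives $\E[\cut_{\Gbb}(\us)\mid Z] = (\sum_{V_1}Z_i)(\sum_{V_2}Z_j)/(n\gamma-1)$, with an analogous two-term expression for $\cut_{\Grb}$ involving $Z_i$ and $\gamma-Z_i$. Since $\{Z_i\}$ are i.i.d.\ and $V_1\cap V_2=\emptyset$, the outer expectation factorizes; with $|V_1|=|V_2|=n/2$ one obtains
\[
\E[\cut_{\Gbb}(\us)] = \frac{n\mu^2}{4\gamma}+o(n),\qquad \E[\cut_{\Grb}(\us)] = \frac{n\mu(\gamma-\mu)}{2\gamma}+o(n),
\]
uniformly in $\us\in\Omega_n$. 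Substituting the value of $\mu$ gives \eqref{e3} and \eqref{e1}.

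The delicate identity is \eqref{e2}, because $\tGrr$ depends on the random set of freed R half-edges and on an independent uniform re-matching. My approach is to show that, conditional on the coloring, the combined R-matching $\Grr\cup\tGrr$ is distributed as a \emph{single} uniform matching of the $|R|=\sum_i(\gamma-Z_i)$ red half-edges. This follows from the permutation symmetry of the R half-edges: conditional on which R half-edges are initially paired to R versus to B, the induced R--R sub-matching and the uniform re-match of the free R half-edges are each uniform on their respective domains, and averaging over this decomposition restores full uniformity on $R$; parity is handled by noting that a single self-loop, when $M=|\Grb|$ is odd, contributes only $O(1)$. Granted this, $\E[\cut_{G_2}(\us)] = n(\gamma-\mu)/4 + o(n)$ by the same factorization, $\E[\cut_{\Grr}(\us)] = n(\gamma-\mu)^2/(4\gamma)+o(n)$ by the original matching identity, and subtraction gives
\[
\E[\cut_{\tGrr}(\us)] = \frac{n\mu(\gamma-\mu)}{4\gamma}+o(n) = \frac{n\sqrt{\gamma}\log\gamma}{4} + n\cdot o_\gamma(\sqrt{\gamma}) + o(n),
\]
which is \eqref{e2}. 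As a bonus, the sub-leading $n\mu^2/\gamma$ corrections in the three identities cancel exactly in the linear combination $\cut_{\Grb}(\us)-\cut_{\tGrr}(\us)+\cut_{\Gbb}(\us)$ appearing in \eqref{cut_equation}, which is what ultimately lets the proof of Theorem \ref{d-reg} close at precision $n\cdot o_\gamma(\sqrt{\gamma})$.

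The main obstacle is the exchangeability claim for $\Grr\cup\tGrr$, together with the parity handling. A more hands-on alternative that bypasses exchangeability is to condition directly on the per-vertex counts $F_i$ of freed R half-edges, apply the matching identity to get $\E[\cut_{\tGrr}(\us)\mid Z,(F_i)] = (\sum_{V_1}F_i)(\sum_{V_2}F_j)/(M-1)$, and replace $(F_i,M)$ by their conditional means $(R_i\mu/\gamma,\,n(\gamma-\mu)\mu/\gamma)$, with the replacement errors controlled by variance bounds that follow from negative association of $(F_i)$ under the uniform matching (in the spirit of the Efron--Stein argument in Lemma \ref{concentration}). Either route delivers \eqref{e2}, and uniformity over $\us\in\Omega_n$ is automatic throughout, since only the cardinalities $|V_1|=|V_2|=n/2$ enter the final expressions.
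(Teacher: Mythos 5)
Your proposal is correct, and the argument for \eqref{e1} and \eqref{e3} is essentially the paper's computation read in the opposite order: where the paper conditions on the realized colored graph $G_1$ and averages $\cut_\Grb(\us^\star)$ over a uniformly random bisection $\us^\star$, you condition on the coloring $\{Z_i\}$ and average over the uniform half-edge matching via the identity $\E(\#\text{pairs }A\text{--}B)=|A||B|/(n\gamma-1)$. Both reduce to the same formulas $\E[\cut_\Grb(\us)]=\tfrac{n}{2}\E[Z_1]\bigl(1-\E[Z_1]/\gamma\bigr)+o(n)$ and $\E[\cut_\Gbb(\us)]=\tfrac{n}{4}\E[Z_1]^2/\gamma+o(n)$, and the required estimate $\E[Z_1]=\sqrt{\gamma}\log\gamma+o_\gamma(1)$ does follow from the Poisson tail bound you cite (the paper only records the coarser $O_\gamma(1)$).

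The genuinely different step is \eqref{e2}. You propose either to invoke that $\Grr\cup\tGrr$ is jointly a \emph{single} uniform matching of all red half-edges (route (a)), or to replace conditional counts by their means with variance control (route (b)). Route (a) is exactly the paper's Lemma \ref{lem-pair}, which the authors prove by a direct $(2k-1)!!$-counting identity; you correctly flag that the ``averaging over the $(R_1,R_2)$ decomposition restores uniformity'' heuristic needs a proof, since uniformity of the two conditional sub-matchings does not on its own imply uniformity of the mixture (the distribution of $(R_1,R_2)$ has to interact correctly with the counts). The paper's own proof of \eqref{e2} is more economical than either of your routes: it sidesteps the exchangeability entirely by observing that each {\small\sf RB} edge frees one red half-edge, so $|E(\tGrr)|$ excluding self-loops equals $\tfrac12|E_\Grb|$ up to $O(1)$, after which the same average-over-random-bisection argument gives \eqref{e2} from \eqref{e1} directly.

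Finally, your observation that the sub-leading $n\mu^2/\gamma$ terms cancel exactly in $\cut_\Grb-\cut_\tGrr+\cut_\Gbb$ is a useful sharpening: as you note, expanding the individual expressions actually produces a $-(\log\gamma)^2/2$ correction in \eqref{e1} and $-(\log\gamma)^2/4$ in \eqref{e2} that does not fit inside the paper's stated $O_\gamma(1)$ (it is only $O_\gamma((\log\gamma)^2)=o_\gamma(\sqrt{\gamma})$). That is a minor imprecision in the lemma statement, harmless for Theorem \ref{d-reg} because, exactly as you point out, the corrections cancel in the combination used in \eqref{cut_equation}.
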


The last result we need, is the following uniform bound on the
fluctuations, proved in Subsection \ref{proof_uniformbound}, that
allows us to control the effect of the edge rewiring
on the extremal bisections. 
\begin{lemma}
\label{uniform_bound}
There exists $C$ sufficiently large, independent of $n$ and $\gamma$, such that 
\begin{align}
\P\left[\sup_{\us \in \Omega_n} |\cut_{\mathcal{A}}(\us) - \E[\cut_{\mathcal{A}}(\us)]| > C n \gamma^{1/4} \sqrt{\log \gamma}  \right] &= o(1) \label{estimate_equation1}
\end{align}
where $\mathcal{A}$ may be distributed as $\Grb \cup \Gbb$ or  $\tGrr$. 
\end{lemma}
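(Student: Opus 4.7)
The plan is to prove pointwise concentration of $\cut_{\mathcal{A}}(\us)$ for each fixed $\us \in \Omega_n$ at a rate of $\exp(-\Omega(C^2 n))$ for deviations of order $Cn\gamma^{1/4}\sqrt{\log \gamma}$, and then take a union bound over $|\Omega_n| \le 2^n$ balanced partitions. Since $2^n e^{-\Omega(C^2 n)} = o(1)$ once $C$ is sufficiently large, this would yield \eqref{estimate_equation1}. The target variance proxy per partition must therefore match the number of edges carried by $\mathcal{A}$: indeed, with $|E(\mathcal{A})| = \Theta(n \sqrt{\gamma}\log \gamma)$ the Azuma scale $t^{2}/|E(\mathcal{A})|$ at $t = C n \gamma^{1/4}\sqrt{\log \gamma}$ evaluates to $\Theta(C^{2} n)$, exactly what is needed to beat the $2^n$ union bound.

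My first step would be to observe that $\Gbb$ is negligible: by Poisson concentration of the color counts $\{X_i\}$, we have $|E(\Gbb)| = O(n(\log \gamma)^{2})$ w.h.p., so $|\cut_{\Gbb}(\us) - \E \cut_{\Gbb}(\us)| \le 2|E(\Gbb)| = o(n\gamma^{1/4}\sqrt{\log \gamma})$ uniformly in $\us$. For $\mathcal{A} = \tGrr$, I would condition on the first-stage randomness (coloring and initial matching producing $G_{1}$); under this conditioning $\tGrr$ is a uniform random matching of the $2 M_{R}$ free R half-edges with $M_{R} = \Theta(n\sqrt{\gamma}\log \gamma)$ w.h.p., and exposing the matched pairs sequentially yields a Doob martingale of length $M_R$ with $O(1)$-bounded increments, since each newly revealed edge contributes at most $1$ to the cut. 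Azuma--Hoeffding then yields the required tail $\exp(-\Omega(C^{2}n))$ for deviations from the conditional mean, while the gap between the conditional and unconditional means is $o(n\gamma^{1/4}\sqrt{\log \gamma})$ uniformly in $\us$ by the Poisson concentration of $\{X_i\}$. For $\mathcal{A} = \Grb$, the key observation is that $\Grb$ is determined entirely by the partners of the $B = \Theta(n\sqrt{\gamma}\log \gamma)$ B half-edges; I would expose these $B$ partners sequentially (skipping already-matched half-edges), producing a Doob martingale of length $\Theta(n\sqrt{\gamma}\log \gamma)$ and obtaining the same $\exp(-\Omega(C^{2}n))$ tail.

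The hard part will be verifying the $O(1)$-Lipschitz property of the Doob martingale in the $\Grb$ case. The naive strategy of exposing all $n\gamma/2$ pairs of $G_{1}$ sequentially gives only $\exp(-\Omega(n \log \gamma/\sqrt{\gamma}))$ tails, which cannot overcome the $2^{n}$ union bound; it is essential to restrict exposure to the $\Theta(n\sqrt{\gamma}\log \gamma)$ B-half-edges that actually determine $\Grb$, and then to show—via a swap/coupling argument on the residual pool of free half-edges—that revealing the partner of a single B-half-edge perturbs the conditional expectation of the remaining cut by $O(1)$ despite the depletion of the matching pool. With this in hand, the three pieces $\Gbb$, $\tGrr$, and $\Grb$ combine by the triangle inequality to give the claimed uniform bound for both choices $\mathcal{A} \in \{\tGrr,\, \Grb \cup \Gbb\}$.
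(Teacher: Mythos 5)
Your overall architecture is essentially the paper's: condition on a degree-type statistic, run a Doob martingale that exposes only the $\Theta(n\sqrt{\gamma}\log\gamma)$ half-edges actually determining $\mathcal{A}$ (so Azuma gives a $\exp(-\Omega(C^2 n))$ tail that beats the $2^n$ union bound), then control the deviation of the conditional mean from the unconditional one. The paper verifies the $O(1)$-bounded differences exactly via the switching argument you anticipate (citing Wormald), so that part of your plan is on solid ground, and your side-remark that exposing all $n\gamma/2$ pairs would be fatal matches the paper's construction of first pairing all \textsf{B} half-edges so the martingale terminates after $S_n(\mathbf{z})$ steps. Splitting off $\Gbb$ and bounding its cut fluctuation by $2|E(\Gbb)| = O(n(\log\gamma)^2)$ is a small but clean variant.

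There is, however, a genuine gap where you write that for $\tGrr$ the gap between the conditional and unconditional means is $o(n\gamma^{1/4}\sqrt{\log\gamma})$ uniformly in $\us$ ``by the Poisson concentration of $\{X_i\}$.'' After conditioning on the first stage, the conditional mean is
\[
c(\us,\Degsec)=\frac{S^+(\us)\,(S-S^+(\us))}{S-1}=\frac{S^2-\bigl(2S^+(\us)-S\bigr)^2}{4(S-1)},
\qquad S^+(\us)=\sum_{\{i:\sigma_i=1\}}Y_i,
\]
so its $\us$-dependence is through $(2S^+(\us)-S)^2/(4(S-1))=\bigl(\sum_i\sigma_iY_i\bigr)^2/(4(S-1))$. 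Unlike the $\Grb\cup\Gbb$ case — where the analogous $\us$-term carries a $1/(n\gamma)$ suppression and is deterministically $O(n(\log\gamma)^2)$ — here the denominator is only $S\approx n\sqrt{\gamma}\log\gamma$, and the numerator $\bigl(\sum_i\sigma_iY_i\bigr)^2$ is at the critical scale. Crucially, the $Y_i$ are \emph{not} Poisson and not independent: $Y_i$ counts the \textsf{R} half-edges of vertex $i$ freed by the first-stage matching, so it is a functional of that matching. Bounding $\sup_{\us}\bigl|\sum_i\sigma_i Y_i\bigr|$ therefore requires a second union bound over $\Omega_n$ with its own exponential tail estimate; the paper does this through the $p_3(n)$ term, invoking hypergeometric concentration for $S^+(\us)$ (together with a stochastic-domination step to handle the fluctuation $S'-S$). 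This second large-deviation argument is arguably the most delicate piece of the lemma, and your appeal to Poisson concentration of the coloring alone does not reach it. You have also inverted which step is ``hard'': the martingale-increment bound is a standard switching argument, whereas the uniform control of $c(\us,\Degsec)$ for $\tGrr$ is where the real work lies.
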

Turning to prove Theorem \ref{d-reg}, we have 
from \eqref{cut_equation} and Lemma \ref{uniform_bound} 
that w.h.p. as $n \to \infty$, 
%
\begin{align}\label{eq:bisec-reg}
\sup_{\us \in \Omega_{n}} \Big| \cut_{G_1}(\us) -\cut_{G_2}(\us) 
+ \E[\cut_{\tGrr}(\us)] - \E[\cut_{\Grb \cup \Gbb}(\us) ] \Big|
= n o_{\gamma}(\sqrt{\gamma}) \,.
\end{align}
In view of Lemma \ref{expectedcut}, we deduce from \eqref{eq:bisec-reg} that      
w.h.p. as $n \to \infty$,
$$
\sup_{\us \in \Omega_n} \Big| \cut_{G_1}(\us) -\cut_{G_2}(\us) 
- n \frac{\sqrt{\gamma} \log \gamma}{4} \Big|
= n o_{\gamma}(\sqrt{\gamma}) + o(n) \,.
$$
This in turn implies that w.h.p.
\begin{align*}
\mcut(G_1) &= \mcut(G_2) + n \frac{\sqrt{\gamma} \log \gamma}{4} + 
n o_{\gamma}(\sqrt{\gamma}) + o(n) \,, \\
\Mcut(G_1) &= \Mcut(G_2) + n \frac{\sqrt{\gamma} \log \gamma}{4} + 
n o_{\gamma}(\sqrt{\gamma}) + o(n) \,,
\end{align*} 
and Theorem \ref{d-reg} thus follows from Lemma \ref{Grr}
(recall that $\gamma=\gamma_- + \sqrt{\gamma} \log \gamma$).

\subsection{Proof of Lemma \ref{Grr}}
\label{proof_grr}
Let $G_n^{\rm{int}}$ be the random graph generated from the 
configuration model with i.i.d. $X_i \sim \dPois(\gamma_-)$ 
degrees. We denote by $\Pc {\gamma_-}$ the sub-graph 
obtained by independently deleting each half-edge 
of $G_n^{\rm{int}}$ with probability $1/n$, before 
matching them. By the thinning property of the $\dPois$ law, 
$\Pc {\gamma_-}$ has the law of the Poisson-Cloning model, where 
one first generates i.i.d. $\zeta_i \sim \dPois(\frac{n-1}{n}\gamma_-)$, 
then draws a random graph from the configuration model with $\zeta_i$ 
half-edges at vertex $i$. Recall \cite{Kim} that  
the $G_I(n, \frac{\gamma_-}{n})$ and $\Pc {\gamma_-}$ models are mutually contiguous.
Further, $\gamma_-/\gamma \to 1$, and so by Theorem \ref{ER}, w.h.p. 
\begin{align}\label{eq:mcut-Pc}
\frac{\mcut(\Pc{\gamma_-})}{n} &= \frac{\gamma_-}{4}- P_* \sqrt{\frac{\gamma}{4}} + o_{\gamma}(\sqrt{\gamma})\, , \\
 \frac{\Mcut(\Pc{\gamma_-})}{n} &= \frac{\gamma_-}{4}+ P_* \sqrt{\frac{\gamma}{4}} + o_{\gamma}(\sqrt{\gamma})\,.\label{eq:Mcut-Pc}
\end{align}
Next note that for any two graphs $\mathcal{G}_1, \mathcal{G}_2$ on $n$ vertices, 
$|\Mcut(\mathcal{G}_1) - \Mcut(\mathcal{G}_2| \leq |E(\mathcal{G}_1) \Delta E(\mathcal{G}_2)|$ and $|\mcut(\mathcal{G}_1) - \mcut(\mathcal{G}_2| \leq |E(\mathcal{G}_1) \Delta E(\mathcal{G}_2)|$. W.h.p. our coupling has 
$\sum_i (X_i-\zeta_i) = O(1)$ half-edges from $G_n^{\rm{int}}$ 
not also in $\Pc {\gamma_-}$. 
Hence $|E(G_n^{\rm{int}}) \Delta E(\Pc {\gamma_-})| = O(1)$ 
and \eqref{eq:mcut-Pc}-\eqref{eq:Mcut-Pc} extend to 
$\mcut(G_n^{\rm{int}})$ and $\Mcut(G_n^{\rm{int}})$, respectively.

We proceed to couple $G_n^{\rm{int}}$ and $G_2$ such that  
$|E(G_n^{\rm{int}}) \Delta E(G_2)| \leq n o_{\gamma}(\sqrt{\gamma})$ w.h.p.
thereby yielding the desired conclusion.  
To this end, $G_2$ could have alternatively been generated by \emph{one} 
uniform random matching of only the $X_i' := \min\{X_i,\gamma\}$ 
{\small\sf RED} half-edges that each vertex $i$ has in $G_1$ 
(for completeness, we prove this statement in Lemma \ref{lem-pair}). 
We can thus couple $G_2$ and $G_n^{\rm{int}}$ by first forming 
$G_n^{\rm{int}}$, then independently for $i=1,\ldots,n$ 
color in {\small\sf RED} uniformly at random $X_i'$ of the 
$X_i$ half-edges of vertex $i$, with all remaining half-edges 
colored {\small\sf BROWN}. Now, to get $G_2$ we delete all
{\small\sf BB} edges, disconnect all {\small\sf RB} edges
and delete the resulting {\small\sf B} half-edges, then 
uniformly re-match all the free {\small\sf R} half edges
(for Lemma \ref{lem-pair} applies again in this setting).
The claimed bound on $|E(G_n^{\rm{int}}) \Delta E(G_2)|$ 
follows since the total number of {\small\sf B} half-edges 
in $G_n^{\rm{int}}$ is w.h.p. 
at most
\begin{equation}\label{eq:norm-aprox}
2 n \E (X_1- X_1') = 2 n \E[(X_1-\gamma)_+] = n O_{\gamma}(1) \,,
\end{equation}
where the \abbr{rhs} follows by Normal approximation to 
$\dPois(\gamma_-)$ and our choice of 
$\gamma_- = \gamma - \sqrt{\gamma} \log \gamma$.
\subsection{Proof of Lemma \ref{expectedcut}}
\label{proof_expectedcut}
We first prove \eqref{e1}, utilizing the fact that the distribution of $\cut_{\Grb}(\us)$ is the same for all $\us \in \Omega_n$. Hence,
\begin{align}
\E[\cut_{\Grb}(\us)] = \E\big[\,\E_{\us^\star}[\cut_{\Grb}(\us^\star)]\,\big] \label{symmetry}
\end{align}
for $\us^\star$ chosen uniformly from $\Omega_n$. Given the graph $G_1$, we have
 \begin{align}\label{e-cond}
\E_{\us^\star}[\cut_{\Grb}(\us^\star)] = \frac{|\Erb|}{2(1-1/n)} \,,
\end{align}
where $\Erb$ denotes the set of {\small\sf RB} edges in $G_1$ excluding self-loops. 
Next, noting that the expected number of edges in $G_1$ excluding self-loops is 
$\frac{n(n-1)\gamma^2}{2(n\gamma-1)}$ and the probability that an edge connecting
two distinct vertices is coloured {\small\sf RB} is $2\:\frac{\E[Z_1]}{\gamma}(1- \frac{\E[Z_1]}{\gamma})$, we have, 
\begin{align}
\E\big[\,|\Erb|\,\big] = \frac{n(n-1)\gamma^2}{n\gamma-1} \frac{\E[Z_1]}{\gamma} \Big( 1- \frac{\E[Z_1]}{\gamma} \Big) \,, \label{e4}
\end{align}
where $Z_1 \sim (\gamma-X_1)_+$ and $X_1 \sim \dPois(\gamma_-)$. 
We get \eqref{e1} out of \eqref{symmetry} and \eqref{e4} upon observing that 
\begin{align}\label{eq:exp-z1}
\E[Z_1] &= \gamma-\E[X_1] + \E[(\gamma-X_1)_-] 
         = \gamma-\gamma_- + \E[(X_1-\gamma)_+]
         = \sqrt{\gamma}\log \gamma + O_{\gamma}(1) \,
\end{align}
(see \eqref{eq:norm-aprox} for the right-most identity). 
By an analogous calculation, we find that for all $\us \in \Omega_n$, 
\begin{align*}
\E\big[\cut_{\Gbb}(\us)\big] = \frac{n(n-1)\gamma^2}{2(n\gamma-1)} \Big(
\frac{\E[Z_1]}{\gamma} \Big)^2 \frac{1}{2(1-1/n)}  = n \big[
\frac{1}{4} (\log \gamma)^2 + o_{\gamma}(1)\, \big] + o(n) .
\end{align*}
Turning to \eqref{e2}, the same argument as in \eqref{symmetry} implies that
\begin{align*} 
\E\big[
\cut_{\tGrr}(\us)\big] = \E\big[\,\E_{\us^\star}[\cut_{\tGrr}(\us^\star)]\,\big]\,,
\end{align*}
for $\us^\star$ chosen unifomly from $\Omega_n$. Further, similarly to 
\eqref{e-cond} we find that given the graph $\tGrr$, 
\begin{equation}\label{e-cond2}
\E_{\us^\star}\big[\,\cut_{\tGrr}(\us^\star)\,\big] = \frac{|E_2|}{2(1-1/n)} \,, 
\end{equation}
where $E_2$ denotes the set of edges in $\tGrr$ excluding self-loops. Recall 
that $|E(\Grb)| - |\Erb|$ and $\frac{1}{2} |E(\Grb)|-|E_2|$ count the 
number of self-loops in $\Grb$ and $\tGrr$, respectively. The expected
number of such self-loops is $O(1)$ as $n \to \infty$, hence
$\E[\,|E_2|\,] = \frac{1}{2} \E[\, |\Erb| \, ] + O(1)$, which upon 
comparing \eqref{e-cond} to \eqref{e-cond2} yields the required expression 
of \eqref{e2}.  
\subsection{Proof of Lemma \ref{uniform_bound}}
\label{proof_uniformbound}
Starting with $\mathcal{A} = \Grb \cup \Gbb$ clearly, for any $x_n > 0$, 
\begin{align}\label{eq:basic-decom}
\P\Big[ \sup_{\us \in \Omega_n} \big|\cut_{\mathcal{A}}(\us) - \E[\cut_{\mathcal{A}}(\us)]\big| \geq 2 x_n\Big]  \leq p_1(n) + p_2(n) 
\end{align}
where $\Deg = (Z_1, \cdots, Z_n)$ count the 
number of {\small\sf BLUE} half-edges at each vertex of $G_1$ and 
\begin{align}
p_1(n) &= \P\Big[ \sup_{\us \in \Omega_n} \big|\cut_{\mathcal{A}}(\us) - 
c(\us,\Deg) \big| \geq x_n \Big] \label{concentration_terms1} \,,\\ 
p_2(n) &= \P\Big[ \sup_{\us \in \Omega_n} \big| c(\us,\Deg) - \E[\cut_{\mathcal{A}}(\us)] \big| \geq x_n \Big]\,, \label{concentration_terms2}
\end{align}
for $c(\us,\Deg):=\E[\cut_{\mathcal{A}}(\us) | \Deg]$.
Letting $S_n(\Deg)=\sum_{i=1}^n Z_i$, note that w.h.p. $\Deg \in \mathcal{E}_n$ for  
$\mathcal{E}_n = \{ \mathbf{z} : |S_n(\mathbf{z}) -n  \E [Z_1] \,| \le b_n \}$ 
and $b_n = \sqrt{n\log n}$.
Hence, by a union bound over $\us \in \Omega_n$ we get that
\begin{align}
p_1(n) &\leq 2^n \max_{\mathbf{z} \in \mathcal{E}_n}
 \max_{\us \in \Omega_n} \P\left[
         \left|\cut_{\mathcal{A}}(\us) -
         c(\us,\Deg)\right| \geq
         x_n \Big| \Deg = \mathbf{z} \right]   + o(1)\, .
         \label{concentration_first} 
\end{align}
We next apply Azuma-Hoeffding inequality to 
control the \abbr{rhs} of \eqref{concentration_first}.
To this end, fixing $\mathbf{z} \in \mathcal{E}_n$ and 
half-edge colors such that $\{\Deg = \mathbf{z}\}$, 
we form $G_1$ by sequentially pairing a candidate half-edge 
to uniformly chosen second half-edge, using first 
{\small\sf BLUE} half-edges as candidates for the 
pairing (till all of them are exhausted).
Then, fixing $\us \in \Omega_n$, we
consider Doob's martingale $M_k=\E[ \cut_{\mathcal{A}}(\us) | \mathscr{F}_k]$,
for the sigma-algebra $\mathscr{F}_k$ generated by all half-edge colors and 
the first $k \ge 0$ edges to have been paired. This martingale 
starts at $M_0=c(\us,\mathbf{Z})$, has differences $|M_k-M_{k-1}|$ 
uniformly bounded by some universal finite non-random 
constant $\kappa$ (independent on $n$, $\us$ and $\mathbf{z}$),
while $M_\ell=\cut_{\mathcal{A}}(\us)$ for all $\ell \ge S_n(\mathbf{z})$
(since the sub-graph $\mathcal{A} = \Grb \cup \Gbb$ is 
completely formed within our sequential matching 
first $S_n(\mathbf{z})$ steps). The bounded difference property of $M_k$ follows easily from the ``switching" argument in \cite[Theorem 2.19]{wormald}. Thus, from Azuma-Hoeffding 
inequality we get that for $\mathbf{z} \in \mathcal{E}_n$,
\begin{align}
\P\left[ \left|\cut_{\mathcal{A}}(\us) - c(\us,\Deg) \right| \geq x_n \Big| \Deg =\mathbf{z} \right]   
\leq 2\,\exp\left(-\frac{x_n^2}{8 \kappa^2  S_n(\mathbf{z})} \right)
\leq 2\, \exp\left(-\frac{x_n^2}{8 \kappa^2 ( n \E[Z_1]+ b_n)} \right) \label{azuma} 
\end{align}
Recall \eqref{eq:exp-z1} 
that $\E[Z_1] = \sqrt{\gamma} \log \gamma + O_\gamma(1)$, hence  
choosing $x_n = C n \gamma^{1/4}\sqrt{\log \gamma}$ for some
$C^2 > 8\kappa^2 \log 3$, we find that the \abbr{rhs} of  
\eqref{concentration_first} decays to zero as $n \to \infty$.  

Turning to control $p_2(n)$, for $i \in [n]$ and $1 \le j \le Z_i$, 
let $I_{ij}(\us)= 1$ if the $j^{\rm {th}}$ {\small\sf {B}} half-edge 
of vertex $i$ is matched to some half-edge from the opposite side of 
the partition induced by $\us$, and $I_{ij}(\us)=0$ otherwise. Then,
\begin{align}
\cut_{\mathcal{A}}(\us) = \sum_{i=1}^{n} \sum_{j=1}^{Z_i} I_{ij} (\us) - \cut_{\Gbb}(\us) .\label{indicators}
\end{align}
For $i$ such that $\sigma_i=1$ and 
$1\leq j \leq Z_i$ we similarly set $I'_{ij}(\us) = 1$ if the 
$j^{\rm{th}}$ {\small\sf{B}} half-edge of vertex $i$ is matched to a 
${\small\sf{B}}$ half-edge of a vertex from the opposite side,
and $I'_{ij}=0$ otherwise. Clearly then 
$$
\cut_{\Gbb} (\us) = \sum_{\{i: \sigma_i =1\}} \sum_{j=1}^{Z_i} I'_{ij}(\us)\,,
$$
so setting $S_n^+(\us,\Deg):=\sum_{\{i:\sigma_i=1\}} Z_i$, we have
from \eqref{indicators} that 
\begin{align}
c(\us,\Deg)  &= \sum_{i=1}^{n} \sum_{j=1}^{Z_i} \P[I_{ij}(\us)=1|\Deg] - 
\sum_{\{i:\sigma_i =1\}} \sum_{j=1}^{Z_i} \P[I'_{ij}(\us) = 1 | \Deg] \nonumber \\
&= S_n(\Deg)  \frac{(n\gamma)/2}{n\gamma -1} - 
S_n^+(\us,\Deg)  \, \frac{S_n(\Deg) - S_n^+(\us,\Deg)}{n\gamma -1} \,. \label{conditionalmean}
\end{align} 
Considering the extreme values of the \abbr{rhs} of \eqref{conditionalmean} 
yields that for all $\us \in \Omega_n$, 
\begin{align*}
\frac{1}{2} S_n(\Deg) \Big(1 -
\frac{S_n(\Deg)}{2 n \gamma} \Big) \leq c(\us,\Deg) \Big(1-\frac{1}{n \gamma}\Big)  
\leq
\frac{1}{2} S_n(\Deg) \,,
\end{align*}
from which we deduce that 
\begin{align}\label{sandwich}
n \frac{\E[Z_1]}{2} \Big(1 -
\frac{\E[Z_1]}{2 \gamma} \Big) + o(n) 
&\leq \inf_{\Deg \in \mathcal{E}_n} \inf_{\us \in \Omega_n} \{c(\us,\Deg)\}   
\nonumber \\
&\leq \sup_{\Deg \in \mathcal{E}_n} \sup_{\us \in \Omega_n} \{c(\us,\Deg)\} \leq
n \frac{\E[Z_1]}{2} + o(n) \,.
\end{align}
Further, while proving Lemma \ref{expectedcut} we have shown that 
$$
\E[\cut_{\mathcal A}(\us)] = n \frac{\E[Z_1]}{2} \frac{n\gamma}{(n\gamma-1)} 
 \Big(1-\frac{\E[Z_1]}{2\gamma}\Big) 
\,,
$$
hence from \eqref{sandwich} and \eqref{eq:exp-z1} it follows that 
$$
\sup_{\Deg \in \mathcal{E}_n} \sup_{\us \in \Omega_n} \big| \, c(\us,\Deg) 
- \E[\cut_{\mathcal A}(\us)]\, \big| \le n \frac{\E[Z_1]^2}{4 \gamma} + o(n) 
\le n (\log \gamma)^2 + o(n) \,,
$$
and since w.h.p. $\Deg \in \mathcal{E}_n$, we conclude that $p_2(n) = o(1)$. 

Next we consider the graph $\mathcal{A}=\tGrr$ and proceeding in a similar 
manner we have the decomposition \eqref{eq:basic-decom}, except for replacing 
in this case $\Deg$ in \eqref{concentration_terms1}-\eqref{concentration_terms2} by the count 
$\Degsec = (Y_1, Y_2, \cdots, Y_n)$ of number of {\small\sf R} 
half-edges at each vertex at the initiation of the second stage. 
The total number $S_n(\Degsec)$ of {\small\sf R} half-edges to be matched 
in the second step is less than the initial number $S_n(\Deg)$ of 
{\small \sf B} half-edges. Consequently, if $\Deg \in \mathcal{E}_n$ then
$\Degsec \in \mathcal{E}_n^+ := 
\{ \mathbf{y} : S_n(\mathbf{y}) \leq n \E[Z_1] + b_n \}$, and we
have again a bound of the type \eqref{concentration_first} on $p_1(n)$,
just taking here the maximum over $\mathbf{y} \in \mathcal{E}_n^+$ instead
of $\mathbf{z} \in \mathcal{E}_n$. Further, we repeat the martingale
construction that resulted with the \abbr{rhs} of \eqref{azuma}. Specifically, 
here $\mathcal{F}_0$ is the sigma-algebra of $\Degsec$, namely
knowing the degrees of vertices in $\tGrr$, and we 
expose in $\mathcal{F}_k$ the first $k$ edges to have been 
paired en-route to the uniform matching that forms $\tGrr$.
As before the Doob's martingale $M_k=\E[\cut_{\tGrr}(\us)|\mathcal{F}_k]$
has uniformly bounded differences, starting at $M_0=c(\us,\Degsec)$ and 
with the same choice of $x_n$ the desired bound on $p_1(n)$ 
follows upon observing that 
$M_\ell=\cut_{\tGrr}(\us)$ for all $\ell \ge S_n(\mathbf{y})$, hence
as soon as $\ell=n \E[Z_1]+b_n$. 
Turning to deal with $p_2(n)$ in this context, 
by the same reasoning that led to \eqref{conditionalmean} we
find that for $S=S_n(\mathbf{y}) \ge 2$ and $S^+(\us)=S_n^+(\us,\mathbf{y})$,
\begin{align}\label{eq:c-red-formula}
c(\us,\mathbf{y}) = \E[\cut_{\tGrr}(\us)|\Degsec=\mathbf{y}] = 
\frac{S^{+}(\us) (S- S^{+}(\us))}{S-1} 
 = \frac{S^2 - \big(2S^+(\us)-S\big)^2}{4(S-1)} \,. 
\end{align}
While proving Lemma \ref{expectedcut} we have shown that
$$
\overline{c}_n :=  4 \E[\cut_{\tGrr}(\us)] =
 n \big[ \E[Z_1] + O_\gamma(1) \big] + o(n)
$$
and that $\overline{c}_n$ is constant over $\us \in \Omega_n$. 
As $\overline{c}_n \ge 6 x_n$ for $n$ and $\gamma$ 
large, while $|S^2/(S-1) - S|$ is uniformly bounded,
we deduce from \eqref{eq:c-red-formula} that for any $y_n \le x_n$, 
$$
\{ |2S^+(\sigma)-S'|< x_n\} \bigcap \{ | S - S'| < y_n \} 
\bigcap \{ |S'- \overline{c}_n| < x_n \}
\;\; \Longrightarrow \;\; \{ |4 c(\us,\mathbf{y}) - \overline{c}_n| < 4 x_n \} \,.
$$ 
Now, w.h.p. $S'=S_n(\Deg)$ is in 
$\mathcal{I}_n := [n \E[Z_1] - b_n, n \E[Z_1] +  b_n]$
with $|S' - \overline{c}_n| < x_n$, and taking then 
the union over $\us \in \Omega_n$, we get similarly to the derivation 
of \eqref{concentration_first} that
\begin{align}
p_2(n) \leq &2^n \max_{s_n \in \mathcal{I}_n}  
\max_{\us \in \Omega_n} \P\Big[ \, |  2 S^+(\us) - s_n
| \ge  x_n \,, S > s_n - y_n \, \big| \, S' = s_n \Big] 
\nonumber \\
& + 
\max_{s_n \in \mathcal{I}_n} 
\P\big( S \le s_n - y_n\,\big|\, S'=s_n \big) + o(1) 
=: p_3(n) + p_4(n) + o(1) \, .
\label{tgrr:second}
\end{align}
Starting with $2N=n\gamma$ half-edges of $G_1$ of whom $S'=s_n$ 
colored {\small\sf B} (while all others are colored 
{\sf\small R}), the non-negative number $S'-S$ 
of half-edges in $\Gbb$ is stochastically 
dominated by a $\dBin (s_n,s_n/(2N-s_n))$ random variable.
For $s_n \in \mathcal{I}_n$ the latter Binomial has mean 
$n \E[Z_1]^2/(\gamma-\E[Z_1]) + o(n)$, hence 
in view of \eqref{eq:exp-z1},
$p_4(n) = o(1)$ provided $y_n \ge 3 n (\log \gamma)^2$.
For bounding $p_3(n)$ we assume w.l.o.g. that $\sigma_i =1$ iff $i \le n/2$,
with $S^{+}(\us)$ the total number of {\small\sf R} half-edges 
for vertices $i \le n/2$, which are matched to {\small\sf B} 
half-edges by the uniform matching in the first step. Fixing 
the total number $s_n$ of {\small\sf B} half-edges in $G_1$, 
clearly $S^+(\us)$ is stochastically decreasing in the number 
$S'_+ = \sum_{i=1}^{n/2} Z_i$ of
{\small\sf B} half-edges among vertices $i \leq n/2$. 
Thus, it suffices to bound $p_3(n)$ in the extreme 
cases, of $S'_+=0$, and of $S'_+ = s_n$. The uniform matching 
of the first step induces a sampling without replacement 
with $S^{+}(\us)$ denoting the number of marked balls when drawing a 
sample of (random) size $S \in (s_n-y_n,s_n]$, uniformly without 
replacement from an urn containing $2N-s_n$ balls, 
of which either $N$ or $N-s_n$ balls are marked. By 
stochastic monotonicity, it suffices to consider 
the relevant tails of $S^+(\us)-s_n/2$ only in the 
extreme cases of $S=s_n-y_n$ and $S=s_n$.
As $2N/n = \gamma$, $s_n/n \ll \gamma$ and $y_n \ll x_n$, 
standard tail bounds for 
the hyper-geometric distribution \cite{hypergeometric}  
imply that $p_3(n) =o(1)$ for $\gamma$ sufficiently large, 
thereby completing the proof.

\subsection{A pairing lemma} 
\label{pairing}
We include here, for completeness, the formal proof 
of the fidelity of the two stage pairing procedure
(which was used in our preceding arguments).
\begin{lemma}\label{lem-pair}
Given $2\ell$ labeled balls of color {\small\sf R} and $2m$ 
labeled balls of color {\small \sf B} for some $m \le \ell$,
we get a uniform random pairing of the {\small\sf R} balls 
by the following two step procedure:
\begin{enumerate}
\item First match the $2(m+\ell)$ balls uniformly at random to obtain 
some {\small\sf RR}, {\small\sf RB} and {\small\sf BB} pairs.
\item Remove all {\small\sf B} balls and uniformly re-match the {\small\sf R}
balls which were left unmatched due to the removal of the {\small\sf B} balls. 
\end{enumerate} 
\end{lemma}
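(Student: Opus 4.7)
The plan is to exploit the label-symmetry of the construction, reducing the claim to the transitivity of a natural group action on perfect matchings. Concretely, I would observe that the entire two-step procedure is equivariant in distribution under any permutation $\pi \in S_{2\ell}$ of the {\small\sf R}-ball labels: the uniform matching of step (1) and the uniform re-matching of step (2) are each manifestly invariant under relabeling of {\small\sf R} balls, since both are sampled from distributions that do not privilege any label. Hence, writing $M_2$ for the random final pairing of the $2\ell$ {\small\sf R} balls, one has $M_2 \stackrel{d}{=} \pi \cdot M_2$ for every $\pi \in S_{2\ell}$, where $\pi$ acts on pairings through its action on labels.

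The second step is to note that $S_{2\ell}$ acts transitively on the set $\mathcal{M}$ of perfect matchings of $2\ell$ labeled balls: given any two matchings, one can explicitly write down a permutation of the underlying labels that carries one to the other. Combined with the invariance established in the previous step, transitivity forces the law of $M_2$ to be constant on $\mathcal{M}$, which is exactly the uniformity claim.

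I expect no serious obstacle. The only point requiring some mild care is making the equivariance in step (2) rigorous, because the conditional distribution of the re-matched pairs depends on which {\small\sf R} balls remain unmatched after step (1). This is easily resolved by coupling the procedure executed on the original labels and on the $\pi$-relabeled input through the same underlying uniform random choices; under this coupling, the final $\pi$-relabeled pairing agrees pathwise with the action of $\pi$ on the final original pairing. As an alternative, one could carry out a direct enumeration: fix a target pairing $P$, decompose over $k = $ the number of pairs of $P$ already produced in step (1), and check that the resulting probability is independent of $P$; but the symmetry argument avoids this case analysis entirely and is essentially two lines once the equivariance is stated.
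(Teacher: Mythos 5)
Your proposal is correct, and it takes a genuinely different route from the paper's proof. The paper proves the lemma by direct enumeration: it conditions on the number $s$ of {\small\sf RB} pairs produced in step (1), writes out the probability of obtaining a fixed target pairing $P$ as a sum over $s$ of products of double factorials and binomial coefficients, and collapses the sum via the identity $\sum_{s=0}^{m} 2^{2s}\binom{m+\ell}{m-s,\ell-s,2s} = \binom{2(m+\ell)}{2\ell}$ to arrive at $1/(2\ell-1)!!$. Your argument replaces this computation with an invariance argument: the law of the final pairing is invariant under the $S_{2\ell}$-action on {\small\sf R}-labels (because both the step-(1) matching and the step-(2) rematching are sampled from label-agnostic uniform distributions, and the conditioning event ``which {\small\sf R} balls are left unmatched'' is itself equivariant), and since $S_{2\ell}$ acts transitively on perfect matchings of $2\ell$ labeled points, an invariant law is necessarily uniform. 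The point you flag — that step (2) is conditional on a random subset determined by step (1) — is exactly the one spot requiring care, and your coupling (push the same underlying randomness through both the original and the $\pi$-relabeled runs) resolves it cleanly: one checks that $(\tilde\pi\cdot U_1,\,\pi\cdot U_2)$ has the same joint law as $(U_1,U_2)$, where $\tilde\pi$ extends $\pi$ by the identity on {\small\sf B} balls. Your approach is shorter and more conceptual, avoids the combinatorial identity entirely, and makes clear that the restriction $m\le\ell$ plays no role in the lemma; the paper's computation has the modest advantage of also exhibiting the distribution of the number of {\small\sf RB} pairs as a byproduct, though that is not needed elsewhere.
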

\begin{proof} 
We use the notation 
$(2k-1)!! = (2k-1) (2k-3) \cdots 1$ and $[m]_k= m(m-1) \cdots (m-k+1)$ and let
$\mathscr{P}$ denote the random pairing of the $2\ell$ {\small\sf R} balls 
by our two-stage procedure (which first generated $2s$ pairs of type 
{\small\sf RB}, 
$(m-s)$ of type {\small\sf BB} and $(\ell-s)$ of type {\small\sf RR}).
We then have that for any fixed final pairing $P$ of the {\small \sf R} balls, 
\begin{align}
\P[ \mathscr{P}= P] &= \sum_{s=0}^{m} {\ell \choose s} \frac{[2m]_{2s} (2(m-s)-1)!!}{(2m+2\ell-1)!! (2s-1)!!} \nonumber\\
&= \frac{\ell! (2m)! 2^\ell}{ (2m+2\ell)!} 
\sum_{s=0}^{m} 2^{2s} { m+\ell \choose {m-s, \ell-s, 2s}} 
= \frac{1}{(2\ell-1)!!} \,, \nonumber
\end{align}
where the last identity follows upon observing that 
$\sum_{s=0}^{m} 2^{2s} {m+\ell \choose {m-s, \ell-s, 2s}} = {2(m+\ell) \choose 2\ell}$.
\end{proof}
\section{From Bisection to Cut: Proof of Theorem \ref{maxcut_thm}}
\label{maxcut} 
Let $\mathcal{I}^\pm (\us) := \{ i : \sigma_i = \pm 1 \}$ be the 
partition of $[n]$ induced by $\us$ and 
$m(\us) := \frac{1}{2} \sum_{i=1}^n \sigma_i$
the difference in size of its two sides. Note that 
by the invariance of $\cut_G(\us)$ under the symmetry $\us \to -\us$, 
it suffices to compare the cuts in  $\mathcal{S}_n^+ = \{ \us \in \{-1,+1\}^n : 
m(\us)\geq 0 \}$ to those in $\Omega_n$. To this end, define the map 
$T: \mathcal{S}_n^+ \to \Omega_n$ where 
we flip the spins at the subset $V(\us)$ of smallest $m(\us)$ indices within 
$\mathcal{I}^+(\us)$, thereby moving all those indices to 
$\mathcal{I}^-(T(\us))$. Let 
$X(\us)$, $Y(\us)$ and $Z(\us)$ count the number of edges from $V(\us)$ 
to $\mathcal{I}^-(\us)$, $\mathcal{I}^-(T(\us))$
and $\mathcal{I}^+(T(\us))=\mathcal{I}^+(\us) \backslash V(\us)$,
respectively. Fixing $0<\delta< 1/4$ let 
\begin{align}\label{dfn:S-star}
\mathcal{S}^\star = \left\{ \us \in \mathcal{S}_n^+ : m(\us) \leq
                  \gamma^{-\delta} n \right\} \,.
\end{align}
Then, for $\us^\star \in \mathcal{S}_n^+$ such that 
$\MaxCut(G_n) = \cut_{G_n}(\us^\star)$ we have 
$$
\MaxCut(G_n) = \cut_{G_n} (T(\us^\star)) + X(\us^\star) - Z(\us^\star)
\le \Mcut(G_n) + Y(\us^\star) - Z(\us^\star) \,.
$$
Considering the union over $\us \in \mathcal{S}^\star$, we get that 
\begin{align}
 \P[ \MaxCut(G_n) > \Mcut(G_n) + \Delta_n]
 &\leq 
 2^n \max_{\us \in \mathcal{S}^\star}  
\,\P\Big[ Y(\us) - Z(\us) > \Delta_n \Big] 
+ \P\left[\us^\star \notin \mathcal{S}^\star\right]  
\nonumber \\
&=: q_1(n) + q_2(n) \, .  \label{terms}
\end{align} 
In proving part (a) of Theorem \ref{maxcut_thm}, we consider w.l.o.g. 
the \ER random graphs $G_n \sim G_I(n,\frac{\gamma}{n})$ as in 
Remark \ref{er_equivalence}. For fixed $\us \in \mathcal{S}^+_n$ each of
the independent variables $Y(\us)$ and $Z(\us)$ is $\dBin(N,\gamma/n)$
for $N=m(\us) (n/2)$. Upon computing the m.g.f. of 
$Y(\us)-Z(\us)$ we get by Markov's inequality that for any $\theta>0$,
$$
\P\Big[ Y(\us) - Z(\us) > \Delta_n \Big] \le e^{- 2 \theta \Delta_n} 
[1 + \frac{4 \gamma}{n} \sinh^2(\theta)]^N \,.
$$
Setting $\Delta_n =  n \gamma^{\psi/2}$ for some $\psi \in (1-\delta,1)$ fixed
and the maximal $N=\frac{1}{2} n^2 \gamma^{-\delta}$ for 
$\us \in \mathcal{S}^\star$, we deduce that
\begin{equation}\label{eq:ld-ge-thm}
\limsup_{n \to \infty}
n^{-1} \log \P\Big[ Y(\us) - Z(\us) > \Delta_n \Big] 
\le - 2 [\theta \gamma^{\psi/2} - \gamma^{1-\delta} \sinh^2(\theta)] =: - J \,.
\end{equation}
Since $\psi>1-\delta$ we have that  
$\gamma^{1-\delta} \sinh^2(\gamma^{-\psi/2}) \to 0$, 
so taking $\theta= \gamma^{-\psi/2}$ results with $J>1$ for all 
$\gamma$ large enough, in which case $q_1(n)=o(1)$ (see \eqref{terms}). 
As for controlling $q_2(n)$, recall Theorem \ref{ER} that w.h.p. 
$\MaxCut(G_n) \ge \Mcut(G_n) \ge n\gamma/4$. Hence, considering the 
union over $\us \notin \mathcal{S}^\star$ we have that 
$$
q_2(n) \le 2^n \max_{\us \notin \mathcal{S}^\star} 
\P\big( \, \cut_{G_n} (\us) \ge \frac{n \gamma}{4} \, \big) \,.
$$
For our \ER graphs $\cut_{G_n}(\us) \sim R_{k} :=
\dBin(k(n-k),\frac{\gamma}{n})$ with $k = \frac{n}{2} - m(\us)$. Taking the maximal 
$k^\star := \frac{n}{2} - n \gamma^{-\delta}$ for $\us \notin \mathcal{S}^\star$
and computing the relevant m.g.f. yields, similarly to \eqref{eq:ld-ge-thm},
that for $f_1(\theta)=e^\theta-1$, $f_2(\theta)=e^{\theta}-\theta-1$
and any $\theta>0$, 
\begin{equation}\label{eq:second-ge} 
\limsup_{n \to \infty}
n^{-1} \log \P
\big( \, R_{k^\star} \ge \frac{n \gamma}{4} \, \big)  
\le \frac{\gamma}{4} f_2(\theta) - \gamma^{1-2\delta} f_1(\theta) := -J' \,. 
\end{equation}
Since $\gamma f_2(\gamma^{-1/2})$ is uniformly bounded while 
$\gamma^{1-2\delta} f_1(\gamma^{-1/2}) = O_\gamma(\gamma^{1/2-2\delta})$
diverges (due to our choice of $\delta<1/4$), it follows that 
for $\theta=\gamma^{-1/2}$ and $\gamma$ large enough,
$J' \ge 1$ hence $q_2(n)=o(1)$, thereby completing the proof. 

The \ER nature of the graph $G_n$ is only used for deriving 
the large deviation bounds \eqref{eq:ld-ge-thm} and \eqref{eq:second-ge}. 
While slightly more complicated, similar computations apply also 
for $\Rg \gamma$. Indeed, in this case $Y(\us)-Z(\us)$ corresponds
to the sum of spins in a random sample of size $\gamma m(\us)$ taken   
without replacement from a balanced population of $\gamma n$ spins
(so by standard tail estimates for the hyper-geometric law, here too
the \abbr{lhs} of \eqref{eq:ld-ge-thm} is at most $-1$ for any 
$\gamma$ large enough). Similarly, now $R_{k^\star}$ counts the  
pairs formed by uniform matching of $\gamma n$ items, between 
a fixed set of $\gamma k^{\star}$ items 
and its complement (so by arguments similar to those we used
when proving  Lemma \ref{uniform_bound}, the \abbr{lhs} of \eqref{eq:second-ge} 
is again at most $-1$ for large $\gamma$). With the rest of the 
proof unchanged, we omit its details.
%
\bibliographystyle{amsalpha-nodash}
\bibliography{graphref}

\end{document}